\documentclass[preprint, 11pt]
{elsarticle}
\usepackage{microtype}
\usepackage[english]{babel}
\usepackage{amsfonts,amsmath,amsthm,mathtools}
\usepackage{color}
\usepackage{amssymb}
\usepackage{pstricks,pst-node,pst-text,pst-3d}
\usepackage{tikz}

\usetikzlibrary{arrows}

\usepackage{environ}
\makeatletter
\providecommand{\env@tikzpicture@save@env}{}
\providecommand{\env@tikzpicture@process}{}

\makeatother

\numberwithin{figure}{section}
\numberwithin{equation}{section}

\theoremstyle{plain}
\newtheorem{theorem}{Theorem}[section]
\newtheorem{corollary}{Corollary}[section]
\newtheorem{lemma}{Lemma}[section]
\newtheorem{proposition}{Proposition}[section]

\newtheorem*{convention}{Convention}
\newtheorem*{BRT}{Birkhoff's representation theorem}

\theoremstyle{definition}
\newtheorem{definition}{Definition}[section]
\newtheorem{example}{Example}[section]
\newtheorem{remark}{Remark}[section]


\newcommand{\dR}{\mathbb{R}}


\newcommand{\calC}{\mathcal{C}}
\newcommand{\calD}{\mathcal{D}}
\newcommand{\calF}{\mathcal{F}}

\newcommand{\calJ}{\mathcal{J}}
\newcommand{\calL}{\mathcal{L}}

\newcommand{\calS}{\mathcal{S}}
\newcommand{\calT}{\mathcal{T}}
\newcommand{\calU}{\mathcal{U}}
\newcommand{\calV}{\mathcal{V}}


\newcommand{\bx}{\mathbf{x}}
\newcommand{\by}{\mathbf{y}}


\renewcommand{\phi}{\varphi}
\renewcommand{\epsilon}{\varepsilon}
\providecommand{\abs}[1]{\lvert#1\rvert}


\DeclareMathOperator{\relint}{relint}
\DeclareMathOperator{\verts}{ext}
\DeclareMathOperator{\da}{\downarrow}
\DeclareMathOperator{\DA}{\Downarrow}

\journal{Discrete Applied Mathematics}

\begin{document}

\title{The cone of supermodular games on finite distributive lattices}
\author[PSE]{Michel Grabisch}
\ead{michel.grabisch@univ-paris1.fr}
\author[UTIA]{Tom\'{a}\v{s} Kroupa\corref{cor}}
\ead{kroupa@utia.cas.cz}
\address[PSE]{Paris School of Economics, University of Paris I,\\ Paris, France}
\address[UTIA]{The Czech Academy of Sciences, Institute of Information Theory and Automation,\\ Prague, Czech Republic}
\cortext[cor]{Corresponding author}
\begin{abstract}
In this article we study supermodular functions on finite distributive
lattices. Relaxing the assumption that the domain is a powerset of a finite set,
we focus on geometrical properties of the polyhedral cone of such
functions. Specifically, we generalize the criterion for extremality and study
the face lattice of the  supermodular cone. An explicit description of facets by the corresponding tight linear inequalities is provided.

\end{abstract}

\begin{keyword}
supermodular/submodular function \sep core \sep coalitional game \sep polyhedral cone
\end{keyword}

\maketitle

\section{Introduction}
Supermodular functions, and their duals, submodular functions, play a~central
r\^ole in many fields of discrete mathematics, most notably combinatorial
optimization (rank function of polymatroids: see, e.g., the monograph of Fujishige \cite{Fujishige05}), game
theory (characteristic function of transferable utility games: see, e.g., Peleg
and Sudh\"olter \cite{PelegSudholter07}), decision theory (capacity, Choquet
expected utility \cite{sch89}), lattice theory, etc.

Up to duality, all above examples fall into the category of {\it supermodular
  games}, that is, supermodular functions vanishing at the empty set. They form
a polyhedral cone, whose facets have been found by Kuipers et
al.~\cite{KuipersVermeulenVoorneveld10}. In his 1971 seminal paper, Shapley
\cite{Shapley71} gave the 37 extreme supermodular games for $n=4$ players, and
noted that for larger values of $n$, little can be said. Later, Rosenm\"uller
and Weidner \cite{rowe74} found all extreme supermodular functions by representing each such function as a maximum over shifted additive games. Recently, Studen\'y
and Kroupa \cite{StudenyKroupa:CoreExtreme} revisited the problem and provided
another characterization of extremality, in a sense dual to the result of
Rosenm\"uller and Weidner, but easier to use.

The aim of this paper is to (re)establish in a more general framework and in a
 simpler way the above results (together with new ones) describing the cone
of supermodular games, taking advantage of classical results on polyhedra. We
consider games defined on a finite distributive lattice $\calL$, generated by a partial
order $\preceq$ on the set of players $N$.  The poset induces some relation
between the players, which can be interpreted in various ways: precedence
constraints (Faigle and Kern \cite{fake92}), hierarchy (Grabisch and Xie
\cite{grxi11}), or permission structure (van den Brink and Gilles \cite{brgi96}). Feasible coalitions of players, i.e., those for which the game is defined,
are down-sets on $(N,\preceq)$, and they form  a
distributive lattice $\calL$.  By Birkhoff's theorem, every finite distributive lattice is of this form. The standard case $\calL=2^N$ is recovered when
the poset $(N,\preceq)$ is flat, i.e., when all players are incomparable (no
order relation between the players).

A large amount of research has been done concerning games on distributive
lattices, as well as on other ordered structures (see a survey in
\cite{gra13}). Most of them are related to the solution concepts such as Shapley value or the core. However, up to our knowledge, there is no systematic study on the
geometric properties of the cone of supermodular games defined on distributive
lattices. Note that it is very natural to take a distributive lattice as a domain of a supermodular function since supermodular inequalities involve only the lattice joins and meets. The present paper addresses precisely this point. In the same time we generalize and prove results about extreme rays and facets in a more concise way.

Section~\ref{sec:birkhoff} collects background on 
distributive lattices. Coalitional games are introduced in Section~\ref{sec:gamdislat}, in particular 0-normalized and supermodular games. Section \ref{sec:consup} contains basic facts about 0-normalized supermodular games and the cone thereof. The extreme rays  are characterized in
Section~\ref{sec:mainresult}. Basically, a supermodular game generates an extreme ray if and only if a certain system of linear
equalities has for a solution those vectors which are proportional to the marginal
vectors of the game. Section~\ref{sec:faces} describes the facial structure of
the cone by a certain collection of finite lattices, namely the tight sets
associated with compatible permutations of the poset $(N,\preceq)$. The facets
of the cone of supermodular games are characterized in Section~\ref{sec:facets}.

\section{Finite distributive lattices}\label{sec:birkhoff}
In this section we introduce basic notions and results about Birkhoff duality between finite distributive lattices and finite posets. The reader is referred to \cite[Chapter 3]{Stanley2012} for all the unexplained notions concerning lattices and partially ordered sets (posets).

Let $\calL$ be a finite distributive lattice whose join and meet are denoted by
$\vee$ and $\wedge$, respectively. A partial order $\leqslant$ on $\calL$ is
defined by $a\leqslant b$ if $a\vee b=b$, for all $a,b\in \calL$. Since $\calL$
is finite there exists a top element $\top$ and a bottom element $\bot$ in $\calL$. We always assume that $\calL$ is non-trivial in sense that $\top\neq \bot$. An element $a\in \calL$  is called \emph{join-irreducible} if $a\neq\bot$ and the identity $a=b\vee c$ holding for some $b,c\in \calL$ implies $a=b$ or $a=c$. In particular, $a\in\calL$ with $a\neq \bot$ is an \emph{atom} if the condition $b \leqslant a$ for all $b\in\calL$ implies $b=\bot$ or $b=a$. The join-irreducible elements of a Boolean lattice are precisely its atoms. For any $a,b\in \calL$ such that $a\leqslant b$, we define an \emph{order interval} $$[a,b]\coloneqq\{c\in \calL\mid a\leqslant c\leqslant b\}.$$ An element $a\in \calL$ is join-irreducible if, and only if, there is a unique $a^- \in \calL$ such that $a^-\leqslant a$, $a^-\neq a$, and $[a^-,a]=\{a^-,a\}$. The set of all join-irreducible elements of $\calL$ is denoted by $\calJ(\calL)$ and it is always endowed with the partial order $\leqslant$ of $\calL$ restricted to $\calJ(\calL)$. Thus, $(\calJ(\calL),\leqslant)$ becomes a~nonempty finite poset.

 Let $N\neq\emptyset$ be a finite set and $\preceq$ be a partial order on $N$. A \emph{down-set} in $(N,\preceq)$ is a subset $A\subseteq N$ such that if $i\in A$ and $j\preceq i$ for $j\in N$, then $j\in A$. For any $i \in N$, we denote
\[
\da i \coloneqq  \{j\in N\mid j\preceq i \} \qquad \text{and}\qquad
  \DA i  \coloneqq \da i \setminus \{i\}.
\]
Both $\da i$ and $\DA i$ are down-sets in $(N,\preceq)$. A down-set $A$ is called \emph{principal} if there exists some $i\in N$ such that $A=\da i$. By $\calD(N,\preceq)$ we denote the set of all down-sets in $(N,\preceq)$. It is easy to see that $\calD(N,\preceq)$ is closed under the set-theoretic union $\cup$ and intersection $\cap$. Thus, $\calD(N,\preceq)$ is a finite distributive lattice whose order is the inclusion $\subseteq$ between sets, and whose top and bottom element is $N$ and  $\emptyset$, respectively. The lattice $\calD(N,\preceq)$ is the most general example of a finite distributive lattice by the following classical result.

  \begin{BRT}
Let $\calL$ be a finite distributive lattice. Then the mapping $$F_{\calL}\colon \calL\to \calD(\calJ(\calL),\leqslant)$$ defined by $F_{\calL}(a)\coloneqq \{b \in \calJ(\calL) \mid b\leqslant a\}$ is a lattice isomorphism.
  \end{BRT}
  \noindent
The converse part of duality explains what are  join-irreducible elements in the lattice of down-sets $\calD(N,\preceq)$.
 \begin{proposition}\label{pro:embedding}
 	Let $(N,\preceq)$ be a finite poset. Then the mapping $$\da\colon (N,\preceq)\to \calJ(\calD(N,\preceq))$$ sending every $i\in N$ to the principal down-set $\da i$
 	is an order isomorphism.
 \end{proposition}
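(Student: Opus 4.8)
The plan is to verify the three conditions that together make $\da$ an order isomorphism onto $\calJ(\calD(N,\preceq))$: that every image $\da i$ is indeed join-irreducible (well-definedness), that $\da$ is an order embedding (which in particular gives injectivity), and that it is surjective onto the set of join-irreducible down-sets. Since we already know from the preceding discussion that each $\da i$ is a down-set and that join and meet in $\calD(N,\preceq)$ are $\cup$ and $\cap$, all three conditions become elementary statements about down-sets, and the only one carrying real content is surjectivity.

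First I would dispatch the order-embedding property, which simultaneously yields injectivity. If $i\preceq j$, then transitivity forces $\da i\subseteq\da j$; conversely, if $\da i\subseteq\da j$, then reflexivity gives $i\in\da i\subseteq\da j$, so $i\preceq j$. Hence $i\preceq j$ if and only if $\da i\subseteq\da j$, and antisymmetry upgrades the equality $\da i=\da j$ to $i=j$. Well-definedness is equally direct: $\da i\neq\emptyset$ since $i\in\da i$, and if $\da i=A\cup B$ for down-sets $A,B$, then $i$ lies in one of them, say $A$; as $A$ is a down-set containing $i$, it contains every element below $i$, so $\da i\subseteq A\subseteq\da i$ and $A=\da i$. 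Because union is the join in $\calD(N,\preceq)$, this is precisely the statement that $\da i$ is join-irreducible. (Alternatively, one could invoke the unique-lower-cover criterion recorded earlier, with $\DA i$ as the unique element covered by $\da i$, but the direct argument is shorter.)

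The substantive step, and the one I expect to be the main obstacle, is surjectivity: every join-irreducible $A\in\calD(N,\preceq)$ is principal. The key observation is that an arbitrary down-set is reconstructed from the principal down-sets of its members, $A=\bigcup_{i\in A}\da i$, since $\da i\subseteq A$ for each $i\in A$ (as $A$ is a down-set) while $i\in\da i$ supplies the reverse inclusion. If $A$ is join-irreducible then $A\neq\emptyset$, so this exhibits $A$ as a join of finitely many, and at least one, principal down-sets; iterating the binary definition of join-irreducibility then forces $A=\da i$ for some $i\in A$. This completes the proof, the embedding and well-definedness being routine order-theoretic bookkeeping while the representation $A=\bigcup_{i\in A}\da i$ is the one genuine idea.
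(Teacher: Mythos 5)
Your proof is correct and complete. Note that the paper itself gives no proof of this proposition at all: it is stated as the converse half of Birkhoff duality, with the details left to the standard references (e.g.\ Stanley's book cited in Section~\ref{sec:birkhoff}), so there is no argument of the authors' to compare yours against --- your three-step verification (well-definedness, order embedding, surjectivity) is precisely the standard one. The only step needing care is the last: passing from the binary definition of join-irreducibility to the finite join $A=\bigcup_{i\in A}\da i$, and your ``iteration'' is a sound induction on the number of terms, with finiteness of $N$ guaranteeing the join is finite. A marginally shorter route to surjectivity, more in line with how the paper later uses the covering structure of $\calL$ (cf.\ the identification $A^-=A\setminus\{i\}$ in the proof of Lemma~\ref{lem:imagex}), is to pick a maximal element $i$ of the join-irreducible down-set $A$ and write $A=(A\setminus\{i\})\cup \da i$; both terms are down-sets, and join-irreducibility rules out $A=A\setminus\{i\}$, so $A=\da i$ directly, with no induction. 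This is a stylistic simplification only; your argument stands as written.
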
 
 
 \begin{example}\label{example}
 Let $N\coloneqq \{1,2,3,4\}$ be equipped with the partial order $\preceq$ captured by the Hasse diagram in Figure \ref{fig:ex} on the left. On the right-hand side we depict the lattice of down-sets $\calD(N,\preceq)$. There are four join-irreducible elements in $\calD(N,\preceq)$, namely $\{2\}$, $\{3\}$, $\{4\}$, and $\{1,2,3\}$.

 \begin{figure}
  \begin{center}
 \begin{tikzpicture}[scale=.6]
 \draw[fill] (0,5) circle (3pt) node[above] {$1$} -- (-1.5,3) circle (3pt) node[below] {$2$};
  \draw[fill] (0,5) -- (1.5,3) circle (3pt) node[below] {$3$};
\draw[fill] (3,4) circle (3pt) node[below] {$4$};

\node (empty) at (10,0) {$\emptyset$};
\node (two) at (8,2) {$\{2\}$};
\node (three) at (10,2) {$\{3\}$};
\node (four) at (12,2) {$\{4\}$};
\node (tt) at (8,4) {$\{2,3\}$};
\node (tf) at (10,4) {$\{2,4\}$};
\node (thf) at (12,4) {$\{3,4\}$};
\node (ott) at (8,6) {$\{1,2,3\}$};
\node (ttf) at (11.7,6) {$\{2,3,4\}$};
\node (en) at (10,8) {$N$};
\draw (en) -- (ott); \draw (en) -- (ttf);
\draw (ott) -- (tt); \draw (ttf) -- (tt); \draw (ttf) -- (tf); \draw (ttf) -- (thf);
\draw (tt) -- (two); \draw (tt) -- (three); \draw (tf) -- (two); \draw (tf) -- (four);
\draw (thf) -- (three); \draw (thf) -- (four);
\draw (two) -- (empty); \draw (three) -- (empty); \draw (four) -- (empty);
\end{tikzpicture}
\end{center}
\label{fig:ex}
\caption{The poset $(N,\preceq)$ with the corresponding lattice of down-sets $\calD(N,\preceq)$}
\end{figure}
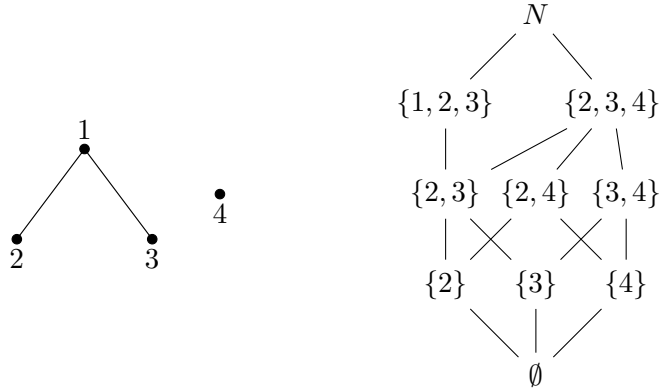

 \end{example}

\section{Coalitional games on finite distributive lattices}\label{sec:gamdislat}
We use the standard terminology of cooperative game theory; see \cite{PelegSudholter07}. The \emph{player set} is defined to be $N\coloneqq \{1,\dots,n\}$, for some integer $n\geq 1$. Any subset of $N$ is called a \emph{coalition}. We allow for a situation in which players $i,j\in N$ are compared using a partial order $\preceq$ on $N$. Hence,  $(N,\preceq)$ is assumed to be a finite poset. Birkhoff duality (see Section \ref{sec:birkhoff}) entails that the partial order $\preceq$ on $N$ restricts the formation of coalitions $A\subseteq N$, provided that the coalition structure is modeled by the lattice of down-sets in $(N,\preceq)$.

\begin{convention}
	Throughout the paper we will always assume that the set of all possible coalitions in $(N,\preceq)$ is the lattice of down-sets $\calD(N,\preceq)$. We use the abbreviations
\[
		\calL \coloneqq \calD(N,\preceq) \qquad \text{and} \qquad
		\calJ \coloneqq\calJ(\calD(N,\preceq)).
\]
\end{convention}
\noindent
 From now on, all possible coalitions are assumed to be precisely the sets belonging to a fixed lattice $\calL$ and $\calJ$ denotes its subset of all join-irreducible elements. Coalitional games are modeled as real functions $v$ on the set $\calL$ of feasible coalitions $A$, where the real value $v(A)$ indicate the amount of utility resulting from the joint cooperation of players in the coalition $A$.
 
 \begin{definition}
   A function $v\colon \calL\to \dR$ satisfying $v(\emptyset)=0$ is a \emph{(coalitional) game}. A game $v$ is called
   \begin{itemize}
   	\item \emph{supermodular} if 
   	$v(A\cup B) + v(A\cap B) \geq v(A) + v(B)$,
	\item \emph{modular} if $v(A\cup B) + v(A\cap B) = v(A) + v(B)$,
	\item \emph{monotone} if $v(A)\leq v(B)$ whenever $A\subseteq B$,
	\item \emph{nonnegative} if $v(A)\geq 0$,
   \end{itemize}
   for all $A,B\in \calL$.   
\end{definition}

\noindent
Let $G(\calL)$ be the set of all games on $\calL$. We consider these subsets of $G(\calL)$:
\begin{align*}
	G_S(\calL) & \coloneqq \{v\in G(\calL) \mid \text{$v$ is supermodular}\},\\
	G_M(\calL) & \coloneqq \{v\in G(\calL) \mid \text{$v$ is  modular}\}.
\end{align*}	
A modular game is also called a \emph{valuation} (over $\dR$) in literature; see
\cite{Birkhoff48,Stanley2012}. Note that $G(\calL)$ is a real vector space
isomorphic to $\dR^{\calL\setminus \{\emptyset\}}$ and therefore $\dim
G(\calL)=\abs{\calL}-1$. One of the bases in $G(\calL)$ is found very
easily. For each nonempty $A\in \calL$, the \emph{unanimity game} $u_A$ is defined by
\begin{equation*}
u_A(B)\coloneqq \begin{cases}
	1 & A\subseteq B,\\
	0 & \text{otherwise,} 
\end{cases}	
\quad \text{for all $B\in \calL$.}
\end{equation*}
Then $\{u_A \mid \emptyset\neq A\in \calL\}$ forms a basis in $G(\calL)$. The coordinates of any game $v\in G(\calL)$ with respect to this basis are calculated using the M\"obius inversion formula \cite{Rota64}. Specifically, the \emph{M\"obius function} of $\calL$ is the function $\mu_{\calL}\colon \calL^2\to \dR$ given recursively as 
\[
\mu_{\calL}(X,Y)\coloneqq
\begin{cases}
1 & X=Y,\\
-\sum\limits_{X\subseteq Z \subset Y} \mu_{\calL}(X,Z) & X\subset Y, \\
0 & \text{otherwise,}
\end{cases}
\qquad \text{for all $X,Y\in \calL$.}
\]
The \emph{M\"obius transform} of $v\in G(\calL)$ is the game $\hat{v} \in G(\calL)$ defined by
\[
\hat{v}(B)\coloneqq\sum_{C\subseteq B} v(C)\cdot  \mu_{\calL}(C,B), \quad B\in\calL.
\]
\begin{lemma}\label{lem:BL}
	For any $v\in G(\calL)$, we have
	\[
	\hat{v}(B)=\sum_{C\subseteq B} v(C)\cdot  (-1)^{\abs{B\setminus C}}, \quad B\in\calL,
	\]
	where the sum above is over all $C\in\calL$ such that $[C,B]$ is a Boolean sublattice of $\calL$.
\end{lemma}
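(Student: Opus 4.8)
The plan is to compute the Möbius function $\mu_\calL(C,B)$ explicitly for every pair $C\subseteq B$ in $\calL$ and then substitute into the defining formula $\hat v(B)=\sum_{C\subseteq B}v(C)\,\mu_\calL(C,B)$. Concretely, I would show that $\mu_\calL(C,B)=(-1)^{\abs{B\setminus C}}$ when the interval $[C,B]$ is a Boolean sublattice of $\calL$, and $\mu_\calL(C,B)=0$ otherwise; the lemma then follows at once, since only the Boolean intervals contribute to $\hat v(B)$.

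First I would reduce the computation to a single poset. Since the Möbius function of an interval depends only on the isomorphism type of that interval, I would describe $[C,B]$ intrinsically: the assignment $D\mapsto D\setminus C$ is an order isomorphism from $[C,B]$ onto the lattice $\calD(B\setminus C,\preceq)$ of down-sets of the subposet induced on $B\setminus C$, its inverse being $E\mapsto E\cup C$. Checking that both maps preserve the down-set property and inclusion is routine and uses only that $C$ and $B$ are down-sets. Consequently $\mu_\calL(C,B)=\mu_{\calD(B\setminus C,\preceq)}(\emptyset,B\setminus C)$. Moreover $\calD(Q,\preceq)$ is Boolean exactly when $Q$ is an antichain, in which case it is isomorphic to $2^{Q}$; hence $[C,B]$ is a Boolean sublattice of rank $\abs{B\setminus C}$ if and only if $B\setminus C$ is an antichain of $(N,\preceq)$.

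It therefore remains to prove the key fact: for any finite poset $P$, $\mu_{\calD(P,\preceq)}(\emptyset,P)=(-1)^{\abs{P}}$ if $P$ is an antichain and $0$ otherwise. I would argue by induction on $\abs{P}$ using the recursion $\mu(\emptyset,P)=-\sum_{I\subsetneq P}\mu(\emptyset,I)$, the sum running over proper down-sets $I$ of $P$; by the induction hypothesis applied to the subposet $I$ (whose down-set lattice is the interval $[\emptyset,I]$), only the down-sets $I$ that are antichains contribute, each with weight $(-1)^{\abs{I}}$. The crux is the observation that a down-set of $P$ which is also an antichain is precisely a subset of the set $M$ of minimal elements of $P$. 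Writing $M$ for this set, the surviving sum becomes an alternating sum of $(-1)^{\abs{I}}$ over the relevant subsets of $M$, and the binomial identity $\sum_{S\subseteq M}(-1)^{\abs{S}}=0$ (valid because $M\neq\emptyset$) collapses it: if $P$ is not an antichain then $M\subsetneq P$, every antichain down-set is already a proper subset, and the full alternating sum over all subsets of $M$ vanishes, giving $\mu(\emptyset,P)=0$; if $P$ is an antichain then $M=P$ must be excluded from the sum, leaving exactly $-(0-(-1)^{\abs{M}})=(-1)^{\abs{P}}$.

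The one genuinely delicate point, and the step I would treat most carefully, is this identification of antichain down-sets with subsets of $\min(P)$ together with the bookkeeping of which term the strict inclusion in the recursion excludes; everything else is the routine verification of the interval isomorphism and the substitution into the Möbius transform.
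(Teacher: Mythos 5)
Your proof is correct, but it takes a genuinely different route from the paper. The paper's entire proof is a citation: it invokes the known simplification of the M\"obius function of a finite distributive lattice (Stanley, \emph{Enumerative Combinatorics}, Example 3.9.6), namely that $\mu_{\calL}(X,Y)=(-1)^{\abs{Y\setminus X}}$ when $[X,Y]$ is Boolean and $0$ otherwise, and then substitutes into the definition of $\hat v$. You instead \emph{prove} that simplification from first principles: you use the concrete realization $\calL=\calD(N,\preceq)$ to identify each interval $[C,B]$ with the down-set lattice $\calD(B\setminus C,\preceq)$, observe that such an interval is Boolean exactly when $B\setminus C$ is an antichain, and then establish by induction on $\abs{P}$ that $\mu_{\calD(P,\preceq)}(\emptyset,P)$ equals $(-1)^{\abs{P}}$ for antichains and vanishes otherwise, the key steps being the identification of antichain down-sets with subsets of $\min(P)$ and the collapse of the alternating sum $\sum_{S\subseteq M}(-1)^{\abs{S}}=0$. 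All of these steps check out (the interval isomorphism, the antichain characterization, and the bookkeeping of the excluded top term are all handled correctly); the only points left tacit are the trivial base case $P=\emptyset$, where $\mu(\emptyset,\emptyset)=1=(-1)^0$ by definition, and the fact that $\min(P)\neq\emptyset$ for the nonempty finite posets that arise. What each approach buys: the paper's citation is shorter and defers to a textbook result valid for all finite distributive lattices; your argument makes the lemma self-contained, exploits the paper's standing convention that $\calL$ is a lattice of down-sets (so no appeal to Birkhoff's theorem is even needed), and exposes the combinatorial mechanism --- the antichain/Boolean-interval dictionary and inclusion-exclusion --- that underlies the cited formula.
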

\begin{proof}
	It suffices to apply the observation from \cite[Example 3.9.6]{Stanley2012}. Specifically,  since the lattice $\calL$ is finite and distributive, the formula for M\"obius function $\mu_{\calL}$ simplifies as 
	\[
	\mu_{\calL}(X,Y)=
	\begin{cases}
	(-1)^{\abs{Y\setminus X}} & \text{if $[X,Y]$ is a Boolean lattice,} \\
	0 & \text{otherwise,}
	\end{cases}
	\]
	for every $X,Y\in \calL$ with $X\subseteq Y$.
\end{proof}

Thus, any  $v\in G(\calL)$ can be expressed as a linear combination $v=\sum_{\emptyset \neq A \in\calL} \hat{v}(A)\cdot u_A$, which gives
\begin{equation}\label{eq:coordMoeb}
v(A)=\sum_{B\subseteq A}\hat{v}(B), \quad A\in\calL.
\end{equation}
The set of valuations (modular games) $G_M(\calL)$ is a vector subspace of $G(\calL)$. By Rota's lemma \cite{Rota71}  any valuation on $\calL$ is uniquely determined by its restriction to the set of join-irreducible elements $\calJ$. It follows that the dimension of linear space $G_M(\calL)$ equals $\abs{\calJ}=n$. This means that the polyhedral cone of supermodular games $G_S(\calL)$ is not pointed as it includes the non-trivial linear space $G_M(\calL)$. However, we can always consider the elements of $G_S(\calL)$ modulo $G_M(\calL)$. To this end we introduce the following notion.
\begin{definition}\label{def:0norm}
 A game $v\in G(\calL)$ is said to be \emph{$0$-normalized} if 
 \[
 \hat{v}(A)=0,\quad \text{for all $A\in \calJ$}.
 \]
  Let $G^{\star}(\calL)$ be the set of all $0$-normalized games on $\calL$.
 \end{definition}
  Note that the notion of $0$-normalized game on a distributive lattice $\calL$ coincides with the usual concept of $0$-normalized game in cooperative game theory (see \cite[Definition 2.1.13]{PelegSudholter07}). Indeed, when $\calL$ is the Boolean lattice $2^N$ of all subsets of $N$, then the only join-irreducible elements in $2^N$ are exactly the atoms in $2^N$, that is, $\calJ=\{\{i\} \mid i\in N  \}$. If $v$ is $0$-normalized in sense of Definition \ref{def:0norm}, from \eqref{eq:coordMoeb} we get $v(\{i\})=\hat{v}(\{i\})=0$ for all $i\in N$, which is exactly the definition of $0$-normalized coalitional game on $2^N$.
  
  \begin{lemma}\label{lem:G0}
  	A game $v\in G(\calL)$ is $0$-normalized if and only if $v(A)=v(A^-)$ for all $A\in \calJ$, where $A^-$ is the unique element covered by $A$.
  \end{lemma}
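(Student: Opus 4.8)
The plan is to reduce the biconditional to a single pointwise identity, namely that $\hat{v}(A) = v(A) - v(A^-)$ holds for every join-irreducible $A \in \calJ$. Once this identity is established, the lemma follows at once: $v$ is $0$-normalized precisely when $\hat{v}(A) = 0$ for all $A \in \calJ$, and for each such $A$ the equation $\hat{v}(A) = 0$ is equivalent to $v(A) = v(A^-)$.

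First I would apply the reconstruction formula \eqref{eq:coordMoeb} to both $A$ and its unique lower cover $A^-$, writing $v(A) = \sum_{B \subseteq A} \hat{v}(B)$ and $v(A^-) = \sum_{B \subseteq A^-} \hat{v}(B)$, where in each case $B$ ranges over the down-sets of $\calL$ below the given element. Since $A^- \subseteq A$, subtracting the two expressions yields
\[
v(A) - v(A^-) = \sum_{\substack{B \subseteq A \\ B \not\subseteq A^-}} \hat{v}(B),
\]
so the task is reduced to identifying exactly which down-sets $B$ satisfy $B \subseteq A$ but $B \not\subseteq A^-$.

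The crux of the argument, and the step I expect to require the most care, is the combinatorial claim that $A$ is the \emph{only} down-set with $B \subseteq A$ and $B \not\subseteq A^-$. I would prove this directly from join-irreducibility. Suppose toward a contradiction that some $B$ with $B \subsetneq A$ satisfies $B \not\subseteq A^-$. Then $B \vee A^-$ strictly exceeds $A^-$, while also $B \vee A^- \subseteq A$ because both $B$ and $A^-$ lie below $A$; hence $B \vee A^-$ belongs to the order interval $[A^-, A]$. Because $A$ is join-irreducible, its lower cover is unique and $[A^-, A] = \{A^-, A\}$, which forces $B \vee A^- = A$. But then $A = B \vee A^-$ with $B \neq A$ (as $B \subsetneq A$) and $A^- \neq A$, contradicting the defining property of join-irreducibility of $A$. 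Therefore every proper down-subset of $A$ is contained in $A^-$, and the only surviving term in the sum above is $B = A$.

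Consequently $v(A) - v(A^-) = \hat{v}(A)$ for each $A \in \calJ$, which is the desired pointwise identity and completes the proof.
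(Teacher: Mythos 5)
Your proof is correct, and it reaches the same pivotal identity $\hat{v}(A)=v(A)-v(A^-)$ for $A\in\calJ$ as the paper does, but by a genuinely different route. The paper obtains this identity in one step from Lemma~\ref{lem:BL}: for a join-irreducible $A$, the interval $[B,A]$ is Boolean exactly when $B=A$ or $B=A^-$, so the alternating sum defining $\hat{v}(A)$ collapses to $v(A)-v(A^-)$. You work in the opposite direction: you apply the reconstruction formula \eqref{eq:coordMoeb} to $v(A)$ and $v(A^-)$, subtract, and then show that $A$ is the only down-set $B\in\calL$ with $B\subseteq A$ and $B\not\subseteq A^-$, which you deduce from join-irreducibility via the fact that $[A^-,A]=\{A^-,A\}$ and that $A=B\vee A^-$ with $B\neq A$, $A^-\neq A$ would violate join-irreducibility. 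What the paper's route buys is brevity, since Lemma~\ref{lem:BL} (i.e., Stanley's description of the M\"obius function of a distributive lattice) does all the work once it is available. What your route buys is independence from that description: you rely only on \eqref{eq:coordMoeb}, which comes from the unanimity games forming a basis, plus an elementary and purely order-theoretic argument; your combinatorial claim is exactly the standard fact that in a finite lattice every element strictly below a join-irreducible element lies below its unique lower cover, and your proof of it is sound. Both arguments are valid and of comparable length, so this is a matter of which piece of the paper's M\"obius machinery one prefers to invoke.
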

  \begin{proof}
  	Let $A\in\calJ$. Then, for any $B\in \calL$ with $B\subseteq A$, the order interval $[B,A]$ is a Boolean sublattice of $\calL$ if, and only if, either $B=A$ or $B=A^-$. Hence, Lemma \ref{lem:BL} yields $\hat{v}(A)=v(A)-v(A^-)$.
  \end{proof}

   For any $v\in G(\calL)$, put
\begin{equation*}
v^{\star} \coloneqq v- \sum_{B\in \calJ} \hat{v}(B)\cdot u_B.
\end{equation*}
 It is easy to see that  $G^{\star}(\calL)=\{v^{\star} \mid v\in G(\calL)\}$. We claim that, for any $v\in G(\calL)$, there exist uniquely determined $w\in G^{\star}(\calL)$ and $m\in G_M(\calL)$ such that 
 \begin{equation}\label{eq:decomp}
 v=w+m.
 \end{equation}
  Indeed, it suffices to define $w\coloneqq v^{\star}$, $m\coloneqq\sum_{B\in \calJ} \hat{v}(B)\cdot u_B$, and observe that $m\in G_M(\calL)$.
Let $$G^{\star}_S(\calL)\coloneqq G_S(\calL)\cap G^{\star}(\calL).$$ Then  $G^{\star}_S(\calL)=\{v^{\star} \mid v\in G_S(\calL)\}$ and Lemma \ref{lem:G0} says that $G^{\star}_S(\calL)$ contains exactly those supermodular games satisfying $v(A)=v(A^-)$, for all $A\in \calJ$. Moreover, the convex cone $G^{\star}_S(\calL)$ is pointed and polyhedral.

\begin{lemma}\label{lem:monneg}
	Every game $v\in G^{\star}_S(\calL)$ is monotone and nonnegative.
\end{lemma}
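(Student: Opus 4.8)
The plan is to establish monotonicity first and then read off nonnegativity as an immediate corollary. Since $v(\emptyset)=0$ and $\emptyset$ is the bottom element $\bot$ of $\calL$, once monotonicity is known we get $v(A)\geq v(\emptyset)=0$ for every $A\in\calL$, so nonnegativity costs nothing extra. To prove monotonicity, i.e.\ $v(A)\leq v(B)$ whenever $A\subseteq B$, I would reduce to covering relations: because $\calL$ is a finite lattice, any pair $A\subseteq B$ is connected by a saturated chain $A=C_0\lessdot C_1\lessdot\cdots\lessdot C_k=B$, and it suffices to verify $v(C_t)\leq v(C_{t+1})$ for each cover $C_t\lessdot C_{t+1}$ and then telescope.

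Next I would describe covers in the lattice of down-sets. If $B$ covers $A$ in $\calL=\calD(N,\preceq)$, then $B=A\cup\{i\}$ for a single element $i\in N$ that is minimal in $N\setminus A$; equivalently $i\notin A$ and $\DA i\subseteq A$, since every element strictly below $i$ must already lie in $A$ (otherwise it would produce a down-set strictly between $A$ and $B$). The crucial point is that, by Proposition~\ref{pro:embedding}, the principal down-set $\da i$ is a join-irreducible element of $\calL$, and its unique lower cover is $(\da i)^-=\DA i$.

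The heart of the argument is then a single application of supermodularity to the feasible coalitions $A$ and $\da i$. Since $\DA i\subseteq A$ and $i\notin A$, one computes $A\cup\da i=A\cup\{i\}=B$ and $A\cap\da i=\DA i$. Supermodularity therefore yields
\[
v(B)+v(\DA i)=v(A\cup\da i)+v(A\cap\da i)\geq v(A)+v(\da i).
\]
Because $v$ is $0$-normalized, Lemma~\ref{lem:G0} applied to the join-irreducible $\da i$ gives $v(\da i)=v(\DA i)$. Substituting this on the right and cancelling the common term $v(\DA i)$ leaves $v(B)\geq v(A)$, which is exactly the desired cover inequality.

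I expect the only genuinely delicate step to be the combinatorial description of the cover relation, namely verifying that a cover in $\calD(N,\preceq)$ adds exactly one element $i$ with $\DA i\subseteq A$, and identifying $\da i\in\calJ$ with lower cover $\DA i$. Once this is in hand, the inequality itself is a direct substitution, and the passage from covers to arbitrary $A\subseteq B$ is a routine telescoping along a saturated chain.
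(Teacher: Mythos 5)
Your proof is correct and follows essentially the same route as the paper's: reduce monotonicity to cover relations in $\calD(N,\preceq)$, apply supermodularity to the pair consisting of the smaller down-set and the principal down-set $\da i$ of the added element $i$, and cancel $v(\da i)=v(\DA i)$ using $0$-normalization via Lemma~\ref{lem:G0}, with nonnegativity then following from $v(\emptyset)=0$. The paper phrases the cover as deleting a maximal element from the larger set rather than adjoining a minimal one to the smaller set, but the argument is the same.
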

\begin{proof}
	Since a monotone game is necessarily nonnegative, it suffices to check monotonicity. We only need to prove that for all $A,B\in\calL$ satisfying $B\subseteq A$ and $\abs{B}=\abs{A}-1$, the inequality $v(B)\leq v(A)$ holds. Since both $A$ and $B$ are down-sets in $(N,\preceq)$, any such $B$ is necessarily of the form $B=A\setminus \{i\}$, where $i$ is a maximal element of $A$ in $(N,\preceq)$. Note that $\da i \subseteq A$ and $B\cap \da i=(A\setminus \{i\})\cap \da i=\DA i$. Then
	supermodularity yields
	\[
	v(A)=v(B\cup \da i) \geq v(B) + v(\da i) - v(\DA i).
	\]
	Since $v$ is $0$-normalized and $\da i\in \calJ$, Lemma \ref{lem:G0} implies $v(\da i) - v(\DA i)=0$.
\end{proof}

By the decomposition \eqref{eq:decomp} we can now write $G_S(\calL)$ as the direct sum of cones, 
\begin{equation}\label{eq:directsum}
G_S(\calL)=G^{\star}_S(\calL) \oplus G_M(\calL).
\end{equation}
Specifically, the identity \eqref{eq:directsum} means that $G_S(\calL)=G^{\star}_S(\calL) + G_M(\calL)$ and $G^{\star}_S(\calL) \cap G_M(\calL)=\{0\}$.
Since $G^{\star}_S(\calL)$ is a pointed polyhedral cone, it is generated by its finitely-many extreme rays.

 In the next section we present a simple linear-algebraic criterion to test if a
 given $0$-normalized supermodular game generates an extreme ray of
 $G^{\star}_S(\calL)$. Our result automatically yields a criterion for
 extremality of games in $G_S(\calL)$: we say  that a supermodular game
   $v\in G_S(\calL)$ is  \emph{extreme} if $v^{\star}$ generates an~extreme ray of $G^{\star}_S(\calL)$. Equivalently, $v\in
 G_S(\calL)$ is extreme if, and only if, the smallest face of $G_S(\calL)$ to
 which $v$ belongs is an atom of the face lattice of $G_S(\calL)$.
Indeed, faces of $G_S(\calL)$ are in one-to-one correspondence with faces of
$G^{\star}_S(\calL)$ by the relation $F=F^{\star}+G_M(\calL)$, where $F$ is a face of
  $G_S(\calL)$ and~$F^{\star}$ a face of $G^*(\calL)$.

\section{The cone of supermodular games}\label{sec:consup}
A \emph{payoff vector} is any vector $x\coloneqq (x_1,\dotsc,x_n)\in\dR^n$ . We define
\[
x(A)\coloneqq \sum_{i\in A} x_i, \quad \text{for any $A\in \calL$,}
\] 
and we always assume $x(\emptyset)\coloneqq 0$. The \emph{core of $v\in G(\calL)$} is a convex polyhedron 
\[
\calC(v)\coloneqq \{x\in\dR^n \mid x(N)=v(N),\; x(A)\geq v(A) \text{ for each $A\in\calL$} \}.
\]
The elements of the core $\calC(v)$ have the standard game-theoretic
interpretation. Namely, no payoff vector $x\in \calC(v)$ can be improved upon by
any coalition $A\in \calL$. In contrast with cores of games over Boolean
lattices, the core of games over distributive lattices can be an unbounded polyhedron. In fact, assume $\calC(v)\neq \emptyset$, where $v\in G(\calL)$. Then $\calC(v)$ is bounded if and only if $\calL$ is a Boolean lattice; see \cite[Chapter 3.3.3]{Grabisch16}. If $v\in G_S(\calL)$, then the polyhedron $\calC(v)$ is pointed and its extreme points $\verts \calC(v)$ are characterized in Theorem~\ref{thm:Cext} below.

 Recall that we always assume that $N$ is partially ordered by~$\preceq$. In addition we also equip $N$ with the total order of natural numbers~$\leq$, so that $(N,\leq)$ becomes a chain.  We say that a permutation $\pi$ of $N$ is \emph{compatible} with $(N,\preceq)$ if $\pi^{-1}$ is an order-preserving map from $(N,\preceq)$ onto $(N,\leq)$. 
   Here, the intended reading is that $i$ is a rank of player $\pi(i)$. Define
\[
\Pi_{\preceq}\coloneqq \left\{\pi \mid \text{$\pi$ is a permutation compatible with $(N,\preceq)$} \right\}.
\]
Compatible permutations are in bijection with maximal chains in $\calL$. Put   $A_0^{\pi}\coloneqq \emptyset$ and $A_i^{\pi}\coloneqq \{\pi(1),\dots,\pi(i)\}$ for each $i\in N$. Then, with each $\pi \in \Pi_{\preceq}$ we associate a maximal chain $\calC^{\pi} \coloneqq \{A_i^{\pi} \mid i\in N \cup \{0\} \}$. Conversely, starting from a maximal chain $\{A_{0},\dots,A_{n}\}$ in $\calL$, where $A_0\subseteq \dots \subseteq A_n$, there is clearly a unique $\pi \in \Pi_{\preceq}$ such that $A_i=A_i^{\pi}$ for each $i\in N \cup \{0\}$.

A \emph{marginal vector} of $v\in G(\calL)$ and $\pi \in \Pi_{\preceq}$ is the  vector $x^{v,\pi} \in \dR^n$ whose coordinates are defined as
\begin{equation}\label{eq:margin}
x^{v,\pi}_{\pi(i)}\coloneqq v(A_i^{\pi}) - v(A_{i-1}^{\pi}), \qquad i \in N.
\end{equation}
It follows directly from the definition of marginal vector that
\begin{equation}\label{eq:keysum}
v(A_i^{\pi})=x^{v,\pi}(A_i^{\pi}),\qquad \text{for all $\pi\in \Pi_{\preceq}$ and all $i \in N\cup \{0\}$}.
\end{equation}
We will make an ample use of the following identity derived from \eqref{eq:keysum}:
\begin{equation}\label{eq:keysum2}
v(A)=x^{v,\pi}(A),\qquad \text{for all $\pi\in \Pi_{\preceq}$ and all $A\in\calC^\pi$}.
\end{equation}
For any $v\in G(\calL)$ and $\pi\in \Pi_{\preceq}$ we define
\[
	\calT^\pi(v) \coloneqq\left\{A\in \calL\mid v(A)=x^{v,\pi}(A) \right\}.
\]
 Each coalition $A \in \calT^\pi(v)$ is said to be \emph{tight} with respect to $v$ and $\pi$. Note that as a consequence of \eqref{eq:keysum2}, the following inclusion holds: 
 \begin{equation}\label{incl:chains}
\calC^\pi\subseteq\calT^\pi(v), \quad \text{for all $v\in G(\calL)$ and all $\pi\in\Pi_\preceq$.}
 \end{equation}

\begin{theorem}\label{thm:Cext}
	Let $\calL$ be a finite distributive lattice and $v\in G(\calL)$. Then the following are equivalent:
\begin{enumerate}
	\item $v\in G_S(\calL)$.
	\item $x^{v,\pi} \in \calC(v)$, for each $\pi\in\Pi_{\preceq}$.
	\item $\verts \calC(v) = \{x^{v,\pi} \mid \pi \in \Pi_{\preceq} \}$.
	\item $v(A)=\min\limits_{\pi\in\Pi_{\preceq}} x^{v,\pi}(A)$, for each $A\in\calL$.
\end{enumerate}	
\end{theorem}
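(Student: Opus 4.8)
The plan is to establish the cycle $(1)\Rightarrow(2)\Rightarrow(3)\Rightarrow(4)\Rightarrow(1)$, using repeatedly that the map $A\mapsto x^{v,\pi}(A)$ is additive and that, by \eqref{eq:keysum2}, $x^{v,\pi}$ coincides with $v$ on the whole chain $\calC^\pi$. For $(1)\Rightarrow(2)$ I fix $\pi\in\Pi_\preceq$ and a down-set $A$, and list its elements in $\pi$-order as $A=\{\pi(i_1),\dots,\pi(i_m)\}$, $i_1<\dots<i_m$. The first observation is that every prefix $P_j\coloneqq\{\pi(i_1),\dots,\pi(i_j)\}$ is again a down-set: if $c\preceq\pi(i_r)$ with $r\le j$, then $c\in A$ and $\pi^{-1}(c)\le i_r\le i_j$, so $c\in P_j$. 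Applying supermodularity to the down-sets $P_j=P_{j-1}\cup\{\pi(i_j)\}$ and $A^\pi_{i_j-1}$ (whose union is $A^\pi_{i_j}$ and whose meet is $P_{j-1}$) yields $v(A^\pi_{i_j})-v(A^\pi_{i_j-1})\ge v(P_j)-v(P_{j-1})$; summing over $j$ telescopes to $x^{v,\pi}(A)\ge v(A)$. With $x^{v,\pi}(N)=v(N)$ this gives $x^{v,\pi}\in\calC(v)$.

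The core of the argument is $(2)\Rightarrow(3)$, and it rests on the lemma that, for $x\in\calC(v)$ with $v$ supermodular, the family of tight sets $\calT(x)\coloneqq\{A\in\calL\mid x(A)=v(A)\}$ is a sublattice of $\calL$: from $x(A)+x(B)=v(A)+v(B)\le v(A\cup B)+v(A\cap B)\le x(A\cup B)+x(A\cap B)=x(A)+x(B)$, where the outer equalities are additivity of $x$ and the inner inequalities are supermodularity and core feasibility, the chain collapses and forces $A\cup B,A\cap B\in\calT(x)$. I then prove the two inclusions of $(3)$ separately. By \eqref{incl:chains} the chain $\calC^\pi$ consists of tight down-sets with linearly independent incidence vectors, so $x^{v,\pi}$ is the unique solution of the $n$ induced tight equalities and hence an extreme point of $\calC(v)$. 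Conversely, an extreme point $x$ of the pointed polyhedron $\calC(v)$ saturates $n$ linearly independent constraints, so the incidence vectors of $\calT(x)$ span $\dR^n$; being a finite distributive lattice of this rank, $\calT(x)$ contains a maximal chain $\emptyset=A_0\subsetneq A_1\subsetneq\dots\subsetneq A_n=N$ with $\abs{A_i}=i$. Each $A_i$ is a down-set, so the chain equals $\calC^\pi$ for a compatible $\pi$, and $x(A_i)=v(A_i)=x^{v,\pi}(A_i)$ for all $i$ forces $x=x^{v,\pi}$.

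For $(3)\Rightarrow(4)$ I note that every down-set $A$ lies in some chain $\calC^\pi$: ordering $A$ by a linear extension and then $N\setminus A$ by one produces a linear extension of $(N,\preceq)$, since no element outside the down-set $A$ can lie below an element of $A$. Thus $v(A)=x^{v,\pi}(A)$ by \eqref{eq:keysum2}, while $x^{v,\sigma}(A)\ge v(A)$ for all $\sigma$ because the marginal vectors are exactly the extreme points of $\calC(v)$ by $(3)$; hence $v(A)=\min_\pi x^{v,\pi}(A)$. Finally, for $(4)\Rightarrow(1)$ I fix $A,B\in\calL$ and build one compatible permutation whose chain passes through $A\cap B$, then $A$, then $A\cup B$ (filling in by linear extensions of $A\cap B$, of $A\setminus B$, of $B\setminus A$, and of the remainder, each legitimate because the sets involved are down-sets). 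Then $v(A\cap B)$, $v(A)$, $v(A\cup B)$ all equal the corresponding values of $x^{v,\pi}$ by \eqref{eq:keysum2}, and additivity gives $v(A\cup B)+v(A\cap B)=x^{v,\pi}(A)+x^{v,\pi}(B)=v(A)+x^{v,\pi}(B)\ge v(A)+v(B)$, the last inequality being $x^{v,\pi}(B)\ge\min_\sigma x^{v,\sigma}(B)=v(B)$ from $(4)$.

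I expect the main obstacle to be the ``extreme point $\Rightarrow$ marginal vector'' half of $(2)\Rightarrow(3)$, where the linear-algebraic fact that a vertex saturates $n$ independent constraints must be converted into the combinatorial statement that $\calT(x)$ carries a full flag of down-sets, and this flag must be recognized as a compatible permutation. The sublattice property of $\calT(x)$ and the gradedness of finite distributive lattices are exactly what bridge this gap.
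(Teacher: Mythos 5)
Your individual arguments are essentially all correct, and your proof is more self-contained than the paper's (which cites the equivalence of (1), (2), (3) from the literature and only argues the equivalence with (4)). However, there is a genuine logical flaw in the architecture of your cycle: your proof of $(2)\Rightarrow(3)$ is not a proof of that implication. The sublattice lemma it rests on --- that $\calT(x)=\{A\in\calL\mid x(A)=v(A)\}$ is closed under $\cup$ and $\cap$ for $x\in\calC(v)$ --- explicitly invokes supermodularity of $v$ (the inner inequality $v(A)+v(B)\le v(A\cup B)+v(A\cap B)$ in your collapsing chain), i.e., statement (1). At that point of the cycle (1) is not available: you may only assume (2). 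What you have actually proved is $(1)\wedge(2)\Rightarrow(3)$. Hence the implications you establish are $(1)\Rightarrow(2)$, $(1)\wedge(2)\Rightarrow(3)$, $(3)\Rightarrow(4)$, $(4)\Rightarrow(1)$; these yield the equivalence of (1), (3), (4) together with $(1)\Rightarrow(2)$, but nothing of the form $(2)\Rightarrow\cdots$, so (2) is never shown to imply the rest. This is not a cosmetic issue: $(2)\Rightarrow(1)$ is exactly the nontrivial Ichiishi-type direction that the theorem asserts.

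The repair is cheap and is already contained in your own text. Your argument for $(3)\Rightarrow(4)$ uses (3) only through the consequence that every marginal vector lies in $\calC(v)$ --- which is precisely (2). Relabel it as a proof of $(2)\Rightarrow(4)$ (this is in fact the paper's own argument for that step). The sound structure is then $(1)\Rightarrow(2)\Rightarrow(4)\Rightarrow(1)$, after which (1) is a legitimate consequence of (2), so your tight-set argument correctly gives $(2)\Rightarrow(3)$; finally $(3)\Rightarrow(2)$ is immediate because extreme points of $\calC(v)$ belong to $\calC(v)$. One smaller point: in the vertex-to-marginal-vector direction, the assertion that a sublattice of $\calL$ whose incidence vectors span $\dR^n$ contains a full flag $\emptyset=A_0\subsetneq A_1\subsetneq\dots\subsetneq A_n=N$ with $\abs{A_i}=i$ deserves a proof; ``being a finite distributive lattice of this rank'' is not by itself a justification. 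A correct argument: if a maximal chain of $\calT(x)$ had a step $B\subsetneq B'$ with two distinct elements $i,j\in B'\setminus B$, full span gives some $A\in\calT(x)$ containing exactly one of $i,j$ (otherwise the span lies in the hyperplane $y_i=y_j$), and then $(A\cup B)\cap B'\in\calT(x)$ sits strictly between $B$ and $B'$, contradicting maximality.
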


\begin{proof}
	The equivalence of the first three items is well known; see \cite[Theorem 3.27]{Grabisch16}.
We show that 4. implies 2. Let $\sigma \in\Pi_{\preceq}$. Then, for all $A\in \calL$,
\[
x^{v,\sigma}(A) \geq \min\limits_{\pi\in\Pi_{\preceq}} x^{v,\pi}(A) = v(A).
\]
By \eqref{eq:keysum2} we have $x^{v,\sigma}(N)=v(N)$.

From 2. to 4. It suffices to show that for each $A\in\calL$ there exists $\pi\in\Pi_{\preceq}$ such that $v(A)=x^{v,\pi}(A)$. Clearly, we can always find a maximal chain $\calC^\pi$ in $\calL$ such that $A\in\calC^\pi$ for some $\pi\in\Pi_{\preceq}$. Then (\ref{eq:keysum2}) yields $v(A)=x^{v,\pi}(A)$.
\end{proof}	

\begin{remark}
 Many other characterizations of supermodularity can be found in the literature in case that $\calL$ is a Boolean lattice. See \cite[Appendix A]{StudenyKroupa:CoreExtreme} for a comprehensive list of such conditions. In particular, the implication from 2. to 1. was proved by Ichiishi in \cite{Ichiishi81}. The necessary and sufficient conditions involving specific marginal vectors can be found in \cite{vVHN04}.
\end{remark}

Given $v\in G(\calL)$ let $\bx^{v}\colon \Pi_{\preceq} \to \dR^{n}$ be defined by $$\bx^{v}(\pi)\coloneqq x^{v,\pi}, \quad \text{for all $\pi\in\Pi_{\preceq}$.}$$ Further, we consider a mapping $\bx\colon G(\calL)\to (\dR^{n})^{\Pi_{\preceq}}$ such that
\[
\bx(v)\coloneqq \bx^v, \quad \text{for all $v\in G(\calL)$.}
\]
As in \cite{StudenyKroupa:CoreExtreme} we call $\bx$ the \emph{payoff-array transformation}.

\begin{lemma}\label{lem:lininj}
	The payoff-array transformation $\bx$ is linear and injective.
\end{lemma}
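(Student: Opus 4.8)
The plan is to verify the two claims separately, treating linearity as a formal consequence of the defining formula \eqref{eq:margin} and reserving what little combinatorial content there is for injectivity.

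First I would establish linearity. Fix $\pi\in\Pi_{\preceq}$ and $i\in N$. The chain elements $A_{i}^{\pi}$ and $A_{i-1}^{\pi}$ depend only on $\pi$, not on the game, so by \eqref{eq:margin} the coordinate map $v\mapsto x^{v,\pi}_{\pi(i)}=v(A_{i}^{\pi})-v(A_{i-1}^{\pi})$ is a difference of two evaluation functionals on $G(\calL)$ and is therefore linear. Since this holds for every $i\in N$, the assignment $v\mapsto x^{v,\pi}$ is a linear map into $\dR^{n}$, and as $\pi$ was arbitrary, the induced map $\bx\colon v\mapsto \bx^{v}$ is linear into $(\dR^{n})^{\Pi_{\preceq}}$.

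For injectivity, since $\bx$ is linear it suffices to show that its kernel is trivial. Suppose $\bx^{v}=0$, that is, $x^{v,\pi}=\mathbf{0}\in\dR^{n}$ for every $\pi\in\Pi_{\preceq}$. I would then argue that $v(A)=0$ for every $A\in\calL$. Given such an $A$, choose a compatible permutation $\pi$ whose maximal chain passes through $A$, so that $A\in\calC^{\pi}$; this is precisely the step already used in the proof of Theorem~\ref{thm:Cext}. Applying \eqref{eq:keysum2} gives $v(A)=x^{v,\pi}(A)=\sum_{j\in A}x^{v,\pi}_{j}=0$, because all coordinates of $x^{v,\pi}$ vanish. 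Hence $v=0$, and $\bx$ is injective.

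The only nonroutine point is the existence, for each down-set $A\in\calL$, of a compatible permutation $\pi$ with $A\in\calC^{\pi}$. This amounts to producing a linear extension of $(N,\preceq)$ that lists the elements of $A$ before those of $N\setminus A$; such an extension exists because $A$ is a down-set, so any linear extension of $\preceq$ restricted to $A$ can be prolonged by a linear extension of $\preceq$ on $N\setminus A$. I expect this to be the main (and indeed only mild) obstacle, and it has already been invoked implicitly in the passage from item~2 to item~4 of Theorem~\ref{thm:Cext}.
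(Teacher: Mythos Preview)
Your proof is correct and follows essentially the same approach as the paper's: both establish linearity directly from the defining formula \eqref{eq:margin} and prove injectivity by invoking \eqref{eq:keysum2} together with the fact that every $A\in\calL$ lies on some maximal chain $\calC^{\pi}$. The only cosmetic difference is that you argue via the kernel (using linearity), whereas the paper shows $v=w$ directly from $\bx^v=\bx^w$; these are equivalent, and your added justification for the existence of a suitable $\pi$ is a welcome detail the paper leaves implicit.
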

\begin{proof}
	Linearity is a direct consequence of the identities $x^{v+w,\pi}=x^{v,\pi}+x^{w,\pi}$ and $x^{\alpha v,\pi}=\alpha x^{v,\pi}$, which are true for every $v,w\in G(\calL)$, all $\alpha\in \dR$ and all $\pi\in\Pi_{\preceq}$. Assume that $v,w\in G(\calL)$ satisfy $\bx^v=\bx^w$ and let $A\in\calL$. Then there exists a permutation $\pi\in\Pi_{\preceq}$  such that $A\in \calC^{\pi}$. It follows from \eqref{eq:keysum2} and from the assumption that
	\[
	v(A)=x^{v,\pi}(A)=x^{w,\pi}(A)=w(A).
	\]
	Hence, $\bx$ is injective.
\end{proof}

We describe the range of payoff-array transformation $\bx$ on the set of $0$-normalized games. For any mapping $\by\colon \Pi_{\preceq}\to \dR^n$ we denote $y^{\pi}\coloneqq\by(\pi)\in \dR^n$, for all $\pi\in \Pi_{\preceq}$.

\begin{lemma}\label{lem:imagex}
	Let $\by\colon \Pi_{\preceq}\to \dR^n$. The following are equivalent:
\begin{enumerate}
	\item There is a unique game $v\in G^{\star}(\calL)$ such that $\by=\bx^v$.
	\item These conditions are satisfied:
	\begin{align}
\label{cond1}\tag{$\dagger$}		y^\pi(A)&=y^\sigma(A) \quad  \text{for all $\pi,\sigma\in \Pi_{\preceq}$ and all  $A\in\calC^\pi\cap \calC^\sigma$,} \\
\label{cond2}\tag{$\dagger\dagger$}		y^\pi_i&=0\quad  \text{for all $\pi\in \Pi_{\preceq}$ and all $i\in N$ such that $\da i\in\calC^\pi $.}
	\end{align}
	
\end{enumerate}
\end{lemma}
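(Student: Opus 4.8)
The plan is to prove the two implications separately, using throughout the standing fact (already noted in the proof of Theorem~\ref{thm:Cext}) that every coalition lies on some maximal chain, i.e.\ for each $A\in\calL$ there is $\pi\in\Pi_{\preceq}$ with $A\in\calC^\pi$, together with the identity \eqref{eq:keysum2}. For the implication from~(1) to~(2) I would assume $\by=\bx^v$ for some $v\in G^{\star}(\calL)$, so that $y^\pi=x^{v,\pi}$ for every~$\pi$. Condition \eqref{cond1} is then immediate, since $A\in\calC^\pi\cap\calC^\sigma$ gives $y^\pi(A)=v(A)=y^\sigma(A)$ by \eqref{eq:keysum2}. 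For \eqref{cond2} I would first record an elementary observation that is the combinatorial heart of the whole lemma: if $\da i\in\calC^\pi$, say $\da i=A_k^\pi$, then the unique element $\pi(k)$ of $A_k^\pi\setminus A_{k-1}^\pi$ must be a maximal element of the down-set $\da i$; since $i$ is the greatest element of $\da i$, this forces $\pi(k)=i$ and $A_{k-1}^\pi=\DA i$. Consequently $y^\pi_i=x^{v,\pi}_{\pi(k)}=v(\da i)-v(\DA i)$, and because $\da i\in\calJ$ has $(\da i)^-=\DA i$ (Proposition~\ref{pro:embedding}), Lemma~\ref{lem:G0} and the $0$-normalization of $v$ make this difference vanish.

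For the converse, from~(2) to~(1), I would define $v$ directly: for $A\in\calL$ pick any $\pi$ with $A\in\calC^\pi$ and set $v(A)\coloneqq y^\pi(A)$. Condition \eqref{cond1} makes this independent of the chosen chain, and taking $A=\emptyset=A_0^\pi$ gives $v(\emptyset)=0$, so $v\in G(\calL)$. To check $\bx^v=\by$ I would fix $\pi$ and compute the marginal vector coordinatewise: since $A_i^\pi,A_{i-1}^\pi\in\calC^\pi$, the definition of $v$ and $A_i^\pi=A_{i-1}^\pi\cup\{\pi(i)\}$ give
$$x^{v,\pi}_{\pi(i)}=v(A_i^\pi)-v(A_{i-1}^\pi)=y^\pi(A_i^\pi)-y^\pi(A_{i-1}^\pi)=y^\pi_{\pi(i)},$$
whence $x^{v,\pi}=y^\pi$ for all $\pi$, that is, $\bx^v=\by$.

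It then remains to see that $v$ is $0$-normalized and that it is unique. For normalization I would use Lemma~\ref{lem:G0} and verify $v(\da i)=v(\DA i)$ for every $\da i\in\calJ$ (the join-irreducibles of $\calL$ being exactly the principal down-sets by Proposition~\ref{pro:embedding}): choosing a maximal chain through $\da i$, which by the observation above reaches $\da i$ from $\DA i$ by adding $i$, the computation of the previous paragraph gives $v(\da i)-v(\DA i)=y^\pi_i$, and this is $0$ by condition \eqref{cond2}. Uniqueness is then free, since $\bx$ is injective by Lemma~\ref{lem:lininj}, so no two $0$-normalized games share the payoff-array $\by$. I expect the one genuinely delicate point to be the combinatorial observation that drives both directions — that $\da i\in\calC^\pi$ pins the relevant step of the chain to be $(\DA i,\da i)$ with added player $i$ — which is precisely what ties condition \eqref{cond2} to the $0$-normalization constraint $v(\da i)=v(\DA i)$; the rest is bookkeeping over \eqref{eq:keysum2} and Lemma~\ref{lem:G0}.
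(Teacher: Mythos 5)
Your proof is correct and follows essentially the same route as the paper's: condition \eqref{cond1} via \eqref{eq:keysum2}, condition \eqref{cond2} via the identification of the chain step at $\da i$ as $(\DA i,\da i)$ together with Lemma~\ref{lem:G0}, and the converse by defining $v$ chainwise (well-defined by \eqref{cond1}), with uniqueness from Lemma~\ref{lem:lininj}. The only difference is that you spell out details the paper leaves implicit, namely why $\pi(k)=i$ when $\da i=A_k^\pi$ and the coordinatewise verification that $\bx^v=\by$.
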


\begin{proof}
	Let $\by=\bx^v$ for some $v\in G^{\star}(\calL)$. The equality in \eqref{cond1} is a direct consequence of  \eqref{eq:keysum2} since, for any $\pi,\sigma\in \Pi_{\preceq}$ satisfying  $A\in\calC^\pi\cap \calC^\sigma$, we get
	\[
	y^\pi(A)=x^{v,\pi}(A)=v(A)=x^{v,\sigma}(A)=y^\sigma(A).
	\]
	 Further, let $\pi\in \Pi_{\preceq}$ and $i\in N$ satisfy $\da i\in\calC^\pi $. Put $A\coloneqq\da i$ and observe that $A\in\calJ$ by Proposition \ref{pro:embedding}. This implies that the unique predecessor of $A$ in $\calL$ is $A^-=A\setminus \{i\}$ and $A^-\in \calC^\pi$, by maximality of the chain $\calC^\pi$. We obtain
	\[
	y^\pi_i=x^{v,\pi}_i= x^{v,\pi}(A)-x^{v,\pi}(A^-)=v(A)-v(A^-)=0,
	\]
	where the third equality follows from \eqref{eq:keysum2} and the fourth one  from $0$-normalization of~$v$ (Lemma \ref{lem:G0}).
	
	  Conversely, assume that the conditions \eqref{cond1}--\eqref{cond2} are true. If a game $v\in G^{\star}(\calL)$ satisfying $\by=\bx^v$ exists, then it is unique by injectivity of $\bx$ (Lemma \ref{lem:lininj}). The condition \eqref{cond1} guarantees that it is correct to define the game $v$ as
	  \begin{equation}\label{def:gamefrompi}
	  v(A)\coloneqq y^\pi(A)\quad \text{for all $\pi\in \Pi_{\preceq}$ and all $A\in \calC^\pi$.}
	  \end{equation}
	   By the definition, $\by=\bx^v$.
	  
	   It remains to verify that $v$ is $0$-normalized. Let $A\in\calJ$. By Proposition \ref{pro:embedding} it follows that $A=\da i$ for a unique $i\in N$. There exists some compatible permutation $\pi$ satisfying $A\in \calC^\pi$. Hence, by the definition of $v$ and \eqref{cond2},
	   \[
	   v(A)-v(A^-)=y^\pi(A)-y^\pi(A^-)=y^\pi_i=0.
	   \]
	  This means that $v$ is $0$-normalized and the proof is finished.
\end{proof}

\begin{remark}
	A mapping $\by\colon \Pi_{\preceq}\to \dR^n$, whose special case is the payoff-array transformation $\bx$, can be viewed as a finite collection of possibly repeating points in $\dR^n$ labeled by permutations. This interpretation appears in \cite{DeLoera10}, where a map $\by$ from a finite set into $\dR^n$ is termed a \emph{point configuration}.
\end{remark}

\section{Main result}\label{sec:mainresult}
 Denote \[
 N^\pi(v)\coloneqq \{i\in N\mid x^{v,\pi}_i=0\}.\]
 The main theorem gives a simple criterion how to recognize extreme games among all $0$-normalized supermodular games.

\begin{theorem}\label{thm:main}
	Let $v\in G_S^{\star}(\calL)$ be nonzero. Then the following are equivalent:
\begin{enumerate}
	\item The game $v$ is extreme in $G^{\star}_S(\calL)$.
	\item If $\by\colon \Pi_{\preceq}\to \dR^n$ satisfies the conditions
	\begin{align}
		\label{thm:cond1} \tag{$*$}		y^\pi(A)&=y^\sigma(A) \quad  \text{for all $\pi,\sigma\in \Pi_{\preceq}$ and all  $A\in \calT^\pi(v) \cap \calT^\sigma(v)$,} \\
		\label{thm:cond2}\tag{$**$}		y^\pi_i&=0\quad  \text{for all $\pi\in \Pi_{\preceq}$ and all $i\in N^\pi(v)$,}
	\end{align}	
	then $\by=\alpha\bx^v$, for some $\alpha\in \dR$.
\end{enumerate}
\end{theorem}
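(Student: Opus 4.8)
The plan is to recast condition~2 as a statement purely about $0$-normalized games and then to recognize it as the standard test for an extreme ray: a nonzero $v$ in a pointed polyhedral cone spans an extreme ray precisely when the only directions $w$ along which $v$ may be perturbed while staying inside the cone are the multiples of $v$. First I would reduce the hypothesis on $\by$. Since $\calC^\pi\subseteq\calT^\pi(v)$ by \eqref{incl:chains}, condition \eqref{thm:cond1} implies \eqref{cond1}; and since $\da i\in\calC^\pi$ forces $x^{v,\pi}_i=v(\da i)-v(\DA i)=0$ by $0$-normalization (Lemma~\ref{lem:G0}), so that $i\in N^\pi(v)$, condition \eqref{thm:cond2} implies \eqref{cond2}. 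Hence any $\by$ satisfying \eqref{thm:cond1}--\eqref{thm:cond2} satisfies \eqref{cond1}--\eqref{cond2}, and Lemma~\ref{lem:imagex} produces a unique $w\in G^{\star}(\calL)$ with $\by=\bx^w$. As $\bx$ is injective (Lemma~\ref{lem:lininj}) and $\bx^v$ itself satisfies \eqref{thm:cond1}--\eqref{thm:cond2}, condition~2 is equivalent to the assertion that every $w\in G^{\star}(\calL)$ whose array satisfies \eqref{thm:cond1}--\eqref{thm:cond2} is a scalar multiple of $v$.

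The technical core is the claim that if $\bx^w$ satisfies \eqref{thm:cond1}, then $\calT^\pi(v)\subseteq\calT^\pi(w)$ for every $\pi$, that is, $w(A)=x^{w,\pi}(A)$ whenever $A\in\calT^\pi(v)$. I would prove this by routing through a maximal chain: choose $\sigma$ with $A\in\calC^\sigma$ (every down-set lies on some maximal chain); then $A\in\calC^\sigma\subseteq\calT^\sigma(v)$, so $A\in\calT^\pi(v)\cap\calT^\sigma(v)$, and \eqref{thm:cond1} gives $x^{w,\pi}(A)=x^{w,\sigma}(A)$, the right-hand side equalling $w(A)$ by \eqref{eq:keysum2}.

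For $1\Rightarrow 2$, take $w$ as above and set $d^\pi_A\coloneqq x^{v,\pi}(A)-v(A)\geq 0$ (nonnegative since $x^{v,\pi}\in\calC(v)$ by Theorem~\ref{thm:Cext}). Whenever $d^\pi_A=0$ we have $A\in\calT^\pi(v)\subseteq\calT^\pi(w)$, hence $x^{w,\pi}(A)-w(A)=0$; as there are only finitely many pairs $(\pi,A)$, for all small $\epsilon>0$ both inequalities $x^{v\pm\epsilon w,\pi}(A)-(v\pm\epsilon w)(A)\geq 0$ hold for every $\pi$ and $A$, while equality at $N$ is automatic from \eqref{eq:keysum2}. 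By Theorem~\ref{thm:Cext} this makes $v\pm\epsilon w$ supermodular, and they are $0$-normalized because $v,w\in G^{\star}(\calL)$; thus $v\pm\epsilon w\in G^{\star}_S(\calL)$. Writing $2v=(v+\epsilon w)+(v-\epsilon w)$ and invoking extremality of the ray spanned by $v$ forces $v+\epsilon w\in\dR_{\geq 0}\,v$, whence $w$ is a multiple of $v$ and $\by=\bx^w=\alpha\bx^v$.

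For $2\Rightarrow 1$, suppose $v=v_1+v_2$ with $v_1,v_2\in G^{\star}_S(\calL)$. For $A\in\calT^\pi(v)$ the core inequalities $x^{v_j,\pi}(A)\geq v_j(A)$ together with $x^{v,\pi}(A)=v(A)$ and additivity of marginal vectors force $x^{v_j,\pi}(A)=v_j(A)$, so $\calT^\pi(v)\subseteq\calT^\pi(v_1)$, which yields \eqref{thm:cond1} for $\bx^{v_1}$. For \eqref{thm:cond2}, note $x^{v_j,\pi}_i\geq 0$ since $v_j$ is monotone (Lemma~\ref{lem:monneg}), so $x^{v,\pi}_i=0$ forces $x^{v_1,\pi}_i=0$. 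Condition~2 then gives $\bx^{v_1}=\alpha\bx^v$, hence $v_1=\alpha v$ by injectivity, and pointedness of the cone yields $\alpha\in[0,1]$; thus $v_1,v_2\in\dR_{\geq 0}\,v$ and $v$ is extreme. The main obstacle is the tight-set inclusion $\calT^\pi(v)\subseteq\calT^\pi(w)$: once it is in place, the delicate point of checking that the perturbed games $v\pm\epsilon w$ remain in the cone is handled most cleanly through the core characterization of Theorem~\ref{thm:Cext} rather than through the individual supermodular inequalities.
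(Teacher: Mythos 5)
Your proof is correct, and it reaches the theorem by a genuinely different route than the paper. The paper's argument is linear-algebraic: by the Minkowski--Weyl--Farkas theorem, $v$ is extreme if and only if the solution space $G(v)$ of the modular equalities indexed by $\calF_v$ from \eqref{def:Fv} is one-dimensional; this statement is transported through the injective map $\bx$, and the inclusion \eqref{eq:pp3} together with Lemma~\ref{th:1} (via the intermediate condition \eqref{eq:simplecond}) shows that every array satisfying $(*)$ and $(**)$ lies in $\bx(G(v))$. You dispense with both the duality theorem and the family $\calF_v$: you work with the elementary characterizations of an extreme ray, proving $1\Rightarrow 2$ by the perturbation $v\pm\epsilon w$ and $2\Rightarrow 1$ by the decomposition $v=v_1+v_2$, and you certify cone membership of the perturbed games by the core criterion of Theorem~\ref{thm:Cext}, which the paper's proof never invokes. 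Your chain-routing argument for the inclusion $\calT^\pi(v)\subseteq\calT^\pi(w)$ (choose $\sigma$ with $A\in\calC^\sigma$ and apply $(*)$) is the exact counterpart of the paper's condition \eqref{eq:simplecond}, in a tighter form, and your use of Lemma~\ref{lem:imagex} to pass from arrays to games coincides with the paper's. As for what each approach buys: the paper's $\calF_v$-based formulation is reused later in the analysis of the face lattice (Lemma~\ref{lem:eq}, Proposition~\ref{pro:relint}), and its forward implication needs no $\epsilon$-argument at all; your route is more self-contained and has the merit of making $2\Rightarrow 1$ fully explicit. Indeed, as written, the paper's concluding step combines \eqref{eq:eq:p2}, \eqref{eq:pp3} and Lemma~\ref{th:1}, which yields the implication from extremality to \eqref{eq:result}; the converse implication would additionally require the reverse inclusion, namely that $\bx^w$ satisfies $(*)$ and $(**)$ for every $w\in G(v)$, a point the paper does not spell out, whereas your decomposition argument based on the core inequalities $x^{v_j,\pi}(A)\geq v_j(A)$ and on monotonicity (Lemma~\ref{lem:monneg}) establishes that direction directly.
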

\noindent
 We prepare a lemma to be used in the proof of Theorem \ref{thm:main}.  For any $v\in G(\calL)$, put
 \begin{equation}\label{def:Fv}
 \calF_v \coloneqq\bigl\{\{A,B\}\subseteq \calL \mid  v(A\cup B) +v(A\cap B)= v(A)+v(B), \;A || B\bigr\}.
 \end{equation}
 where \[
 A || B \text{ means $A\not\subseteq B$ and $B\not\subseteq A$.}
 \]
 
 \noindent
For any point configuration $\by\colon \Pi_{\preceq}\to \dR^n$ and a game $v\in G(\calL)$, we consider the following property:
 \begin{equation}\label{eq:simplecond}
\begin{split}
y^{\pi}(A)= y^{\sigma}(A), \quad &\text{for each $\{A,B\}\in \calF_v$ and all $\pi,\sigma\in\Pi_\preceq$}\\ & \text{such that  $A\cap B,B,A\cup B\in \calC^\pi$ and $A \in \calC^\sigma$.}
\end{split}
\end{equation}

\begin{lemma}\label{th:1}
Let $v\in G_S^{\star}(\calL)$ and let $\by:\Pi_\preceq\rightarrow\dR^n$ be such that $(*)$ and $(**)$ are satisfied. Then $\by$ fullfills $(\dagger)$,$(\dagger\dagger)$, and \eqref{eq:simplecond}.
\end{lemma}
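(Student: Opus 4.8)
The plan is to derive each of the three target properties, $(\dagger)$, $(\dagger\dagger)$, and \eqref{eq:simplecond}, from the hypotheses $(*)$ and $(**)$ by showing, in each case, that the coalitions in question lie in the tight sets $\calT^\pi(v)$; this reduces everything to an application of $(*)$ or $(**)$. Two facts are used throughout: the inclusion $\calC^\pi \subseteq \calT^\pi(v)$ from \eqref{incl:chains}, and the modularity of every payoff sum, namely $x(A \cup B) + x(A \cap B) = x(A) + x(B)$ for all $A, B \in \calL$.

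First I would dispatch $(\dagger)$, which is just a restriction of $(*)$. If $A \in \calC^\pi \cap \calC^\sigma$, then \eqref{incl:chains} gives $A \in \calT^\pi(v) \cap \calT^\sigma(v)$, and $(*)$ immediately yields $y^\pi(A) = y^\sigma(A)$.

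Next I would treat $(\dagger\dagger)$. Suppose $\da i \in \calC^\pi$. By Proposition \ref{pro:embedding} the coalition $\da i$ is join-irreducible, with the unique element it covers equal to $\DA i$; maximality of the chain then forces $\DA i \in \calC^\pi$. Consequently $i$ is exactly the player added at the step from $\DA i$ to $\da i$, so $x^{v,\pi}_i = v(\da i) - v(\DA i)$. Since $v$ is $0$-normalized, Lemma \ref{lem:G0} gives $v(\da i) = v(\DA i)$, hence $x^{v,\pi}_i = 0$, i.e.\ $i \in N^\pi(v)$. Now $(**)$ delivers $y^\pi_i = 0$, which is $(\dagger\dagger)$.

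The only genuinely nonobvious step, and the one I expect to be the main obstacle, is \eqref{eq:simplecond}. Fix $\{A, B\} \in \calF_v$ together with $\pi, \sigma \in \Pi_{\preceq}$ such that $A \cap B, B, A \cup B \in \calC^\pi$ and $A \in \calC^\sigma$. Since $A \in \calC^\sigma \subseteq \calT^\sigma(v)$, it suffices to prove $A \in \calT^\pi(v)$, for then $A \in \calT^\pi(v) \cap \calT^\sigma(v)$ and $(*)$ gives the desired equality $y^\pi(A) = y^\sigma(A)$. To show $A \in \calT^\pi(v)$ I would combine three ingredients: the membership $\{A, B\} \in \calF_v$, which by \eqref{def:Fv} supplies the modular equality $v(A \cup B) + v(A \cap B) = v(A) + v(B)$; the tightness on the chain $\calC^\pi$, which via \eqref{incl:chains} gives $v(C) = x^{v,\pi}(C)$ for each $C \in \{A \cap B, B, A \cup B\}$; and the modularity of the payoff sum $x^{v,\pi}(\cdot)$. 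Solving the modular equality for $v(A)$, replacing each of $v(A \cup B), v(A \cap B), v(B)$ by $x^{v,\pi}(A \cup B), x^{v,\pi}(A \cap B), x^{v,\pi}(B)$, and then using $x^{v,\pi}(A) = x^{v,\pi}(A \cup B) + x^{v,\pi}(A \cap B) - x^{v,\pi}(B)$ collapses the right-hand side to $x^{v,\pi}(A)$, yielding $v(A) = x^{v,\pi}(A)$. The essential point is that the single modular equality asserted by $\{A, B\} \in \calF_v$ transfers tightness from the triple $A \cap B, B, A \cup B$ to $A$ itself, precisely because $x^{v,\pi}(\cdot)$ obeys the same modular identity on the pair $\{A, B\}$.
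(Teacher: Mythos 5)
Your proposal is correct and follows essentially the same route as the paper: for \eqref{eq:simplecond} you derive $A\in\calT^\pi(v)$ from the modular equality in $\calF_v$, the tightness of $A\cap B, B, A\cup B$ along $\calC^\pi$, and the additivity of $x^{v,\pi}(\cdot)$, then apply $(*)$ together with $A\in\calC^\sigma\subseteq\calT^\sigma(v)$, exactly as the paper does. The only difference is that you spell out the verifications of $(\dagger)$ and $(\dagger\dagger)$ (via \eqref{incl:chains}, Proposition \ref{pro:embedding}, and Lemma \ref{lem:G0}) which the paper dismisses as easy; these details are accurate.
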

\begin{proof}
Assume that $\by$ satisfies $(*)$ and $(**)$. It is easy to see that $(\dagger)$ and $(\dagger\dagger)$ are true. In order to prove \eqref{eq:simplecond}, let $\{A,B\}\in \calF_v$,  $\pi,\sigma\in\Pi_\preceq$, and $A\cap B,B,A\cup B\in \calC^\pi$, $A \in \calC^\sigma$.  Since
\[
v(A)=v(A\cup B)+v(A\cap B)-v(B)=x^{v,\pi}(A\cup B)+x^{v,\pi}(A\cap B)-x^{v,\pi}(B)=x^{v,\pi}(A),
\]
we get $A\in \calT^\pi(v)$. Hence, $A\in \calT^\pi(v)\cap \calC^\sigma$ and $(*)$ says that $y^{\pi}(A)= y^{\sigma}(A)$.
 \end{proof}

\begin{proof} (of Theorem \ref{thm:main})
Let $v\in G^\star_S(\calL)$ be nonzero. 
We need to show that $v$ is extreme if and only if the following inclusion holds true: 
\begin{equation}\label{eq:result}
\{\by:\Pi_\preceq\rightarrow\dR^n\mid \by\text{ satisfies }(*),(**)\}\subseteq \{\alpha x^v\mid
\alpha\in\dR\}.
\end{equation}
By the Minkowski-Weyl-Farkas theorem (see \cite[Theorem 3.34]{AliprantisTourky07}), $v$ is extreme if and only if  $v$ belongs to the one-dimensional solution space of some set of tight inequalities for $G^\star_S(\calL)$ of the form
$w(A\cup B) + w(A\cap B) - w(A) -w(B) \geq 0$, for all $A,B\in \calL\setminus\{\emptyset,N\}$. Define 
\[G(v) \coloneqq  \{w\in G^\star(\calL)\mid w(A\cup B)+w(A\cap B)=w(A)+w(B), \text{ for all $\{A,B\}\in\calF_v$}\},\] 
where $\calF_v$ is as in \eqref{def:Fv}.
 Thus, extremality of $v$ is equivalent to the condition
\begin{equation}\label{eq:tempx}
G(v) = \{\alpha v \mid \alpha \in \dR\}.
\end{equation}
Putting $\bx(G(v))\coloneqq \{\bx^w \mid w\in G(v)\}$ and using Lemma  \ref{lem:lininj}, it is immediate that \eqref{eq:tempx} holds if and only if
\begin{equation}\label{eq:eq:p2}
\bx(G(v))=\{\alpha x^v\mid
\alpha\in\dR\}.
\end{equation}
We claim that 
\begin{equation}\label{eq:pp3}
\bx(G(v))\supseteq \{\by:\Pi\rightarrow\dR^n\mid \by\text{ satisfies $(\dagger),(\dagger\dagger)$, and \eqref{eq:simplecond}}\}.
\end{equation}
Let $\by$ satisfies $(\dagger),(\dagger\dagger)$, and \eqref{eq:simplecond}. Lemma \ref{lem:imagex} provides a unique $w\in G^{\star}(\calL)$ such that $\by=\bx^{w}$. We need to verify that $w\in G(v)$. To this end, let $\{A,B\}\in \calF_{v}$. Pick permutations $\pi,\sigma\in \Pi_{\preceq}$ such that $A\cap B,B,A\cup B\in \calC^\pi$ and $A\in \calC^\sigma$. Then \eqref{eq:simplecond} shows that
\[
w(A\cup B)+w(A\cap B)-w(B) =y^\pi(A)=y^\sigma(A)=w(A).
\]
Hence, $w\in G(v)$. Finally, from \eqref{eq:eq:p2}, \eqref{eq:pp3}, and Lemma~\ref{th:1}  we get \eqref{eq:result}, and the proof is finished.
\end{proof}	

We will apply Theorem \ref{thm:main} to the cone of supermodular games on the distributive lattice $\calL$ from Example \ref{example}.  The computations were carried out in the package Convex for Maple \cite{Franz16}.
\begin{example}
 The cone $G_S^{\star}(\calL)$ is embedded into $\dR^9$ and its dimension is $5$.  It has $6$ extreme rays. We will enumerate their minimal integer generators. The parentheses and commas are omitted for the sake of brevity in what follows. Whenever $v_i(A)$ is missing, we put $v_i(A)\coloneqq 0$.
\begin{itemize}
\item $v_1(24)=v_1(234)=v_1(N)=1$.
\item $v_2(34)=v_2(234)=v_2(N)=1$.
\item $v_3(23)=v_3(123)=v_3(234)=v_3(N)=1$.
\item $v_4(234)=v_4(N)=1$.
\item $v_5(23)=v_5(24)=v_5(34)=v_5(123)=1,\; v_5(234)=v_5(N)=2$.
\item $v_6(N)=1$.
\end{itemize}
We will check that $v_1$ is extreme using Theorem \ref{thm:main}. 
Since there are $8$~maximal chains in $\calL$, there are $8$ compatible permutations: $\pi_1=(2 3 1 4), \pi_2=(2 3 4 1), \pi_3=(2 4 3 1), \pi_4=(3 2 4 1), \pi_5=(3 2 4 1), \pi_6 = (3 4 2 1), \pi_7=(4 2 3 1)$, and $\pi_8=(4 3 2 1)$.
 Let $I_1\coloneqq \{1,\dots,5\}$ and $I_2\coloneqq \{6,7,8\}$. There are only $2$~marginal vectors associated with $v_1$,
\[
x^{v_1,\pi_i}=
\begin{cases}
(0,0,0,1) & i \in I_1, \\
(0,1,0,0) & i\in I_2. 
\end{cases}
\]
This means that the tight sets are
\[
\calT^{\pi_i}(v_1)=
\begin{cases}
\{\emptyset,2,3,23,24,123,234,N\} & i\in I_1, \\
\{\emptyset,3,4,24,34,234,N\} & i\in I_2. 
\end{cases}
\]
Hence, the conditions $(*)$ and $(**)$ for $\by\colon \Pi_\preceq \to \dR^4$ are in the form of linear equalities, for all $i\in I_1$ and $j\in I_2$:
\begin{align}
y_1^{\pi_i}=y_2^{\pi_i}=y_3^{\pi_i}=0 \label{lineql1} \\
y_1^{\pi_j}=y_3^{\pi_j}=y_4^{\pi_j}=0 \label{lineql2} \\
y_3^{\pi_i}=y_3^{\pi_j} \\
y_2^{\pi_i} + y_4^{\pi_i} =y_2^{\pi_j} + y_4^{\pi_j} \label{lineql3} \\
y_2^{\pi_i} +y_3^{\pi_i}+ y_4^{\pi_i} =y_2^{\pi_j} +y_3^{\pi_j}+ y_4^{\pi_j} \\
y_1^{\pi_i} +y_2^{\pi_i}+y_3^{\pi_i}+ y_4^{\pi_i} =y_1^{\pi_j}+y_2^{\pi_j} +y_3^{\pi_j}+ y_4^{\pi_j} 
\end{align}
The linear system above has a unique solution up to a real multiple.
Observe that $y_4^{\pi_i}=y_2^{\pi_j}$, for all $i\in I_1$ and $j\in I_2$, as a consequence of \eqref{lineql1}, \eqref{lineql2}, and \eqref{lineql3}. Let $\alpha\in \dR$. Then necessarily $\by=\alpha \bx^{v_1}$. Thus, $v_1$ is extreme by Theorem \ref{thm:main}.
\end{example}

\begin{remark}
It is natural to ask for a game-theoretic meaning of the extreme supermodular games. Since the supermodular cone is finitely-generated, every supermodular game is a conic combination of the extreme ones. There are important solution concepts \cite{PelegSudholter07}, such as the core or Shapley value, which are linear maps on the supermodular cone. Hence, such solution concepts preserve every conic combination of supermodular games. From this viewpoint, extreme supermodular games play the role of basic building block since they fully determine values of any linear solution concept on the supermodular cone.
\end{remark}

\section{Faces and core structure}\label{sec:faces}
Let $\Phi(G_S(\calL))$ be the \emph{face lattice} of $G_S(\calL)$, that is, the family of all nonempty faces of $G_S(\calL)$ ordered by inclusion $\subseteq$. In what follows we will describe the structure of this face lattice. For any subset $G\subseteq G_S(\calL)$ we define
\[
[G] \coloneqq \bigcap \{F\in \Phi(G_S(\calL)) \mid F\supseteq G\},
\]
 the smallest face containing $G$. Join $\vee$ and meet $\wedge$ in $\Phi(G_S(\calL))$  are computed as 
\[
 F\sqcup G  = [F\cup G] \enskip \text{and} \enskip  F\sqcap G  = F \cap G, \qquad \text{for all $F,G\in \Phi(G_S(\calL))$.}
 \]
 For any face $F$, let $\relint F$ be the relative interior of $F$. Put $\Phi'(G_S(\calL))=\{\relint F \mid F \in \Phi(G_S(\calL))\}$. Then $\Phi'(G_S(\calL))$ is a lattice isomorphic to $\Phi(G_S(\calL))$ in which the top is $\relint G_S(\calL)$, the bottom is $\emptyset$, and the join and the meet are given by
  \begin{align*}
 \relint F\vee \relint G & = \relint (F\sqcup G),\\
 \relint F\wedge \relint G & = \relint (F \sqcap G).
 \end{align*}
\noindent
 The following lemma describes the relation between tight sets of $v\in G_S(\calL)$ and faces of $G_S(\calL)$.
 \begin{lemma}\label{lem:eq}
Let $v\in G_S(\calL)$. The following holds.
\begin{enumerate}
\item Let $\pi\in\Pi_{\preceq}$ and $A,B\in\calT^{\pi}(v)$, $A || B$. Then
  $\{A,B\}\in\calF_{v}$.
\item Let $\{A,B\}\in\calF_v$ and $\pi\in\Pi_{\preceq}$ such that $B,A\cup B,
  A\cap B\in \calC^\pi$. Then $A\in \calT^\pi(v)\setminus \calC^\pi$.
\end{enumerate}
\end{lemma}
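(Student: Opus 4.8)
The plan is to reduce both statements to a single elementary observation: for any $x\in\dR^n$ the set function $S\mapsto x(S)=\sum_{i\in S}x_i$ is modular, so that $x(A\cup B)+x(A\cap B)=x(A)+x(B)$ for all $A,B\in\calL$ (here $A\cup B,A\cap B\in\calL$ because $\calL$ is closed under $\cup$ and $\cap$). Applying this with $x=x^{v,\pi}$ and combining it with the inequalities supplied by $v\in G_S(\calL)$ will force the required equalities. The two facts I would draw on are supermodularity of $v$ and the consequence of Theorem \ref{thm:Cext} that $x^{v,\pi}\in\calC(v)$, hence $x^{v,\pi}(S)\geq v(S)$ for every $S\in\calL$, with equality on $\calC^\pi$ by \eqref{eq:keysum2} and on $\calT^\pi(v)$ by definition.

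For item 1, I would fix $\pi$, abbreviate $x\coloneqq x^{v,\pi}$, and use tightness of $A$ and $B$ together with core membership to write
\[
v(A\cup B)+v(A\cap B)\leq x(A\cup B)+x(A\cap B)=x(A)+x(B)=v(A)+v(B),
\]
the middle equality being modularity of $x$ and the outer ones tightness. Since supermodularity of $v$ supplies the reverse inequality, equality holds throughout; as $A||B$ is assumed, this is exactly the assertion $\{A,B\}\in\calF_v$.

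For item 2, I would again set $x\coloneqq x^{v,\pi}$ and first establish tightness of $A$. The hypothesis $B,A\cup B,A\cap B\in\calC^\pi$ gives, via \eqref{eq:keysum2}, that $v$ and $x$ agree on these three sets, and then the defining equality of $\calF_v$ together with modularity of $x$ yields
\[
v(A)=v(A\cup B)+v(A\cap B)-v(B)=x(A\cup B)+x(A\cap B)-x(B)=x(A),
\]
so that $A\in\calT^\pi(v)$ — this is precisely the computation already appearing in the proof of Lemma \ref{th:1}. The remaining claim $A\notin\calC^\pi$ I would obtain by contradiction: if $A\in\calC^\pi$, then $A$ and $B$ would both lie in the chain $\calC^\pi$ and hence be comparable, contradicting the incomparability $A||B$ built into $\{A,B\}\in\calF_v$.

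I do not expect a genuine obstacle here; the argument is short. The only points requiring care are orienting the core inequality correctly in item 1 (it is precisely the half that complements supermodularity to produce equality) and remembering, in item 2, that one must separately exclude $A\in\calC^\pi$ using incomparability rather than stopping once tightness is shown. The modularity of $x$ and the chain property of $\calC^\pi$ carry the entire argument.
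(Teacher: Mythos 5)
Your proposal is correct, and both items are proved along the paper's lines, with one small difference in item 1 worth noting. The paper disposes of item 1 by citing the well-known fact that $\calT^{\pi}(v)$ is closed under union and intersection for supermodular $v$, so that $A\cup B$ and $A\cap B$ are themselves tight and the modularity of $x^{v,\pi}$ transfers to $v$ on all four sets. You instead prove the required equality directly: tightness of $A$ and $B$, the core inequalities $x^{v,\pi}(A\cup B)\geq v(A\cup B)$ and $x^{v,\pi}(A\cap B)\geq v(A\cap B)$ supplied by Theorem~\ref{thm:Cext}, and supermodularity sandwich $v(A\cup B)+v(A\cap B)$ between $v(A)+v(B)$ on both sides. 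This is precisely the standard proof of the closure fact the paper cites, so your version is self-contained where the paper's is shorter; nothing is gained or lost mathematically. Your item 2 coincides with the paper's computation, and you make explicit the step the paper leaves terse: $A\notin\calC^\pi$ because $A$ and $B$ are incomparable (built into $\{A,B\}\in\calF_v$) while any two members of the chain $\calC^\pi$ are comparable and $B\in\calC^\pi$.
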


 \begin{proof}
   1. Let $v\in G_S(\calL)$, $\pi \in\Pi_{\preceq}$, and $A,B\in
     \calT^{\pi}(v)$ with $A || B$. It is well known that
     $\calT^{\pi}(v)$ is closed under union and intersection for supermodular
     games. Hence, the equality 
 $x^{v,\pi}(A) + x^{v,\pi}(B)=x^{v,\pi}(A\cup B) + x^{v,\pi}(A\cap B)$ yields
$
v(A) + v( B)  = v(A\cup B) + v(A\cap B)
$.
 Therefore, $\{A,B\}\in \calF_{v}$.

2. Let $\{A,B\}\in\calF_v$ and let $\pi\in\Pi_\preceq$ be such that $B,A\cup B,A\cap B\in \calC^\pi$. Then
\[
v(A) = v(A\cup B)+v(A\cap B) - v(B) = x^{v,\pi}(A\cup B) + x^{v,\pi}(A\cap B)-x^{v,\pi}(B) = x^{v,\pi}(A).
\]
Since $A || B$, we get  $A\in\calT^\pi(v)\setminus \calC^\pi$. 
 \end{proof}

 Games in $G_S(\calL)$ belong to the same face if and only if they possess identical structure of their tight sets. Precisely:
 \begin{proposition}\label{pro:relint}
 Let $v,w\in G_S(\calL)$. The following are equivalent.
 \begin{enumerate}
 \item $v,w \in \relint F$, for some $F\in \Phi(G_S(\calL))$.
 \item $\calT^{\pi}(v)=\calT^{\pi}(w)$, for every $\pi\in\Pi_\preceq$.
 \end{enumerate}
 \end{proposition}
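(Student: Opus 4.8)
The plan is to route both conditions through a single combinatorial invariant, the set $\calF_v$ of tight incomparable pairs from \eqref{def:Fv}, and then chain two equivalences. First I would show that condition~(1) is equivalent to $\calF_v=\calF_w$. The cone $G_S(\calL)$ lives in $G(\calL)\cong\dR^{\calL\setminus\{\emptyset\}}$ and is cut out by the inequalities $\ell_{A,B}(v)\coloneqq v(A\cup B)+v(A\cap B)-v(A)-v(B)\geq 0$ indexed by pairs $A,B\in\calL$. Whenever $A$ and $B$ are comparable (in particular whenever one of them is $\emptyset$ or $N$) the functional $\ell_{A,B}$ vanishes identically, so the only nontrivial inequalities are those indexed by incomparable pairs $A\,||\,B$, and for such a pair the inequality is active at $v$ exactly when $\{A,B\}\in\calF_v$. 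Thus $\calF_v$ is precisely the set of constraints of $G_S(\calL)$ that are tight at $v$. By the standard correspondence between faces of a polyhedron and their common active constraints---the relative interiors of the nonempty faces partition the polyhedron, and two points lie in the relative interior of one and the same face if and only if they make exactly the same inequalities tight (this holds for any polyhedron, pointed or not, so the lineality space $G_M(\calL)$ causes no trouble)---condition~(1) holds if and only if $\calF_v=\calF_w$.

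It then remains to prove $\calF_v=\calF_w$ if and only if~(2), and here Lemma~\ref{lem:eq} does the work. For the implication $(2)\Rightarrow\calF_v=\calF_w$, take $\{A,B\}\in\calF_v$ and extend the chain $A\cap B\subseteq B\subseteq A\cup B$ to a maximal one, obtaining $\pi\in\Pi_\preceq$ with $A\cap B,B,A\cup B\in\calC^\pi$. Then Lemma~\ref{lem:eq}(2) gives $A\in\calT^\pi(v)$, so $(2)$ yields $A\in\calT^\pi(w)$; since also $B\in\calC^\pi\subseteq\calT^\pi(w)$ by \eqref{incl:chains} and $A\,||\,B$, Lemma~\ref{lem:eq}(1) applied to $w$ returns $\{A,B\}\in\calF_w$. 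By symmetry $\calF_v=\calF_w$.

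The main obstacle is the converse $\calF_v=\calF_w\Rightarrow(2)$, where I must show every set tight for $v$ is tight for $w$ although such sets need not lie on the chain $\calC^\pi$. I would fix $\pi$ and prove $\calT^\pi(v)\subseteq\calT^\pi(w)$ by induction on $\abs{A}$ for $A\in\calT^\pi(v)$, the reverse inclusion following by symmetry. If $A\in\calC^\pi$ then $A\in\calT^\pi(w)$ by \eqref{incl:chains}. Otherwise let $p$ be the largest $\pi$-rank occurring in $A$ and put $B\coloneqq A_{p-1}^\pi$; using that $A$ is a down-set and $A\neq A_{\abs{A}}^\pi$, a direct check gives $A\cup B=A_p^\pi\in\calC^\pi$, $A\cap B=A\setminus\{\pi(p)\}$ of cardinality $\abs{A}-1$, and $A\,||\,B$. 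Since $\calT^\pi(v)$ is closed under $\cap$ for supermodular $v$ and $B\in\calC^\pi\subseteq\calT^\pi(v)$, we have $A\cap B\in\calT^\pi(v)$, and Lemma~\ref{lem:eq}(1) gives $\{A,B\}\in\calF_v=\calF_w$. The induction hypothesis yields $A\cap B\in\calT^\pi(w)$, while $B,A\cup B\in\calC^\pi\subseteq\calT^\pi(w)$, so $w(B),w(A\cap B),w(A\cup B)$ all coincide with the values of the additive functional $x^{w,\pi}$. Substituting these into the balance equation $w(A)=w(A\cup B)+w(A\cap B)-w(B)$ supplied by $\{A,B\}\in\calF_w$ and invoking additivity of $x^{w,\pi}$ collapses the right-hand side to $x^{w,\pi}(A)$, so $A\in\calT^\pi(w)$. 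The delicate point is exactly this construction of $B$: it must simultaneously place $A\cup B$ and $B$ on the chain while strictly lowering $\abs{A\cap B}$ so the induction can advance, and it is the down-set structure of the coalitions together with compatibility of $\pi$ that guarantees such a $B$ exists.
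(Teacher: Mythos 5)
Your proof is correct, but it takes a genuinely different route from the paper in one of the two directions. For (2) $\Rightarrow$ (1) you and the paper do essentially the same thing: both reduce condition (1) to the equality $\calF_v=\calF_w$ of active supermodularity constraints, and both derive $\calF_v=\calF_w$ from (2) by extending a chain through $A\cap B$ and $A\cup B$ to a maximal one and invoking Lemma~\ref{lem:eq} (the paper places $A$ on the chain and computes $v(B)$ inline; you place $B$ on the chain and cite Lemma~\ref{lem:eq}(2), which is the same computation). The real difference is in (1) $\Rightarrow$ (2). The paper argues geometrically: it takes the line $L$ through $v,w\in\relint F$, writes each as a strict convex combination of the two points of $L\cap(F\setminus\relint F)$, and splits the resulting equalities using core membership $x^{u,\pi}\in\calC(u)$ from Theorem~\ref{thm:Cext}. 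You instead push this direction through $\calF_v$ as well: the standard polyhedral fact that two points of a polyhedron lie in the relative interior of the same face iff they activate the same defining inequalities gives (1) $\Leftrightarrow$ $\calF_v=\calF_w$, and your induction on $\abs{A}$ --- peeling off the element of largest $\pi$-rank, using closure of $\calT^\pi(v)$ under intersection, the equality $\calF_v=\calF_w$, and additivity of $x^{w,\pi}$ --- shows that $\calF_v$ alone reconstructs every tight set $\calT^\pi(v)$; this induction has no counterpart in the paper, and I checked its details ($A\cup B=A^\pi_p$, $A\cap B=A\setminus\{\pi(p)\}$, $A\,||\,B$ because $A\notin\calC^\pi$): they are sound. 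Your route buys two things: a sharper combinatorial byproduct (the core structure $\calT(v)$ is a function of the single invariant $\calF_v$, and conversely), and immunity to a degeneracy the paper glosses over --- the paper's claim $L\cap(F\setminus\relint F)=\{u,u'\}$ fails when $w-v$ lies in the lineality space $G_M(\calL)$, since then $L$ never leaves $\relint F$; that case needs separate (easy) treatment in the paper's argument, whereas the active-set fact you invoke holds for arbitrary non-pointed polyhedra. What the paper's route buys is brevity: it reuses the already established Theorem~\ref{thm:Cext} and requires no induction.
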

 \begin{proof}
 Let $v,w \in \relint F$ for some $F\in \Phi(G_S(\calL))$. If they are linearly dependent, then the statement is trivial. Assume that $v$ and $w$ are linearly independent and let $L$ be the unique line in the linear space $G(\calL)$ such that $v,w\in L$. Then $L\cap (F\setminus \relint F)=\{u,u'\}$, where $u\neq u'$. Since $v,w \in \relint F$ there exist $0 <\alpha,\beta < 1$ such that $v=\alpha u + (1-\alpha) u'$ and $w=\beta u + (1-\beta) u'$.
 
 Let $\pi\in\Pi_\preceq$ and $A\in \calT^{\pi}(v)$. By linearity (Lemma \ref{lem:lininj}) we get
 \[
 \alpha x^{u,\pi}(A) + (1-\alpha) x^{u',\pi}(A) =x^{v,\pi}(A)=v(A)=\alpha u(A) + (1-\alpha) u'(A).
 \]
Since $x^{u,\pi}\in \calC(u)$ and  $x^{u',\pi}\in \calC(u')$, this implies  $x^{u,\pi}(A)=u(A)$ and $x^{u',\pi}(A)=u'(A)$, which means that $A\in \calT^{\pi}(u)\cap \calT^{\pi}(u')$. Hence,
 \[
 w(A)=\beta u(A) + (1-\beta) u'(A) = \beta x^{u,\pi}(A) + (1-\beta) x^{u',\pi}(A) =x^{w,\pi}(A).
 \]
 This proves the inclusion $\calT^{\pi}(v)\subseteq \calT^{\pi}(w)$. The opposite inclusion is established analogously.
 
 To prove the converse, assume $\calT^{\pi}(v)=\calT^{\pi}(w)$ for all $\pi\in\Pi_\preceq$. It suffices to show that $\calF_{v}=\calF_{w}$, where $\calF_{v}$ is as in \eqref{def:Fv}, since this already implies existence of a unique $F\in \Phi(G_S(\calL))$ such that $v,w \in \relint F$. First, we prove  
 \begin{equation}\label{incl1}
 \calF_{v}\subseteq \calF_{w}.
\end{equation}
Let $\{A,B\}\in \calF_{v}$. There exists $\pi\in \Pi_\preceq$ such that $A\cap
B, A, A\cup B\in \calC^{\pi}$. Hence,  $A\in\calT^\pi(v)$ and 
\begin{align*}
v(B) & = v(A\cup B) + v(A\cap B) - v(A) \\
& =x^{v,\pi}(A\cup B) + x^{v,\pi}(A\cap B) - x^{v,\pi}(A) = x^{v,\pi}(B),
\end{align*}
which means $B\in \calT^{\pi}(v)=\calT^{\pi}(w)$. Then $A,B\in
\calT^{\pi}(w)$, and by Lemma \ref{lem:eq} (1) $\{A,B\}\in \calF_{w}$, so \eqref{incl1} holds. The proof of inclusion $\calF_{w}\subseteq \calF_{v}$ is analogous.
 \end{proof}

 Let  $\calS(\calL)$ be the lattice of all sublattices of $\calL$ ordered by set inclusion~$\subseteq$. The \emph{core structure of $v\in G_S(\calL)$}  (cf. \cite{KuipersVermeulenVoorneveld10} and \cite[Definition 4]{StudenyKroupa:CoreExtreme})  is the mapping $\calT(v)\colon \Pi_{\preceq}\to \calS(\calL)$ defined as $$\pi \in \Pi_{\preceq}\; \mapsto\; \calT^{\pi}(v).$$
 \noindent
 The above definition is correct since $\calT^{\pi}(v)$ is a lattice as a consequence of Lemma \ref{lem:eq}.
 By Proposition \ref{pro:relint}, $\calT(v)=\calT(w)$ for all $v,w \in \relint F$. Hence, we may define a mapping \[
 \calT\colon \Phi(G_S(\calL)) \to \calS(\calL)^{\Pi_{\preceq}}
 \]
 by
 \[
 \calT(F) \coloneqq \calT(v), \quad \text{for any $v\in \relint F$.}
 \]
 We will order the elements of $\calS(\calL)^{\Pi_{\preceq}}$ by the product order $\subseteq$ inherited from $\calS(\calL)$. Specifically, for any $\calU,\calV \in \calS(\calL)^{\Pi_{\preceq}}$,
 \[
 \calU \subseteq  \calV \text{ whenever $\calU^{\pi} \subseteq  \calV^{\pi}$},\qquad \text{for all $\pi \in \Pi_{\preceq}$}.
 \]

  \begin{proposition} 
  The mapping $\calT$ is injective, order-reversing, and its inverse $\calT^{-1}$ is also order-reversing.
 \end{proposition}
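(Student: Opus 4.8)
The plan is to deduce all three properties from a single chain of equivalences. Fix faces $F,G\in\Phi(G_S(\calL))$ and pick representatives $v\in\relint F$ and $w\in\relint G$; I would show
\[
F\subseteq G \iff \calF_v\supseteq \calF_w \iff \calT^\pi(v)\supseteq \calT^\pi(w)\ \text{for all } \pi\in\Pi_\preceq .
\]
The leftmost equivalence is the standard description of the face order of a polyhedron by its equality sets: the cone $G_S(\calL)$ is cut out by the inequalities $w(A\cup B)+w(A\cap B)-w(A)-w(B)\ge 0$ over incomparable pairs, the constraints active on a face are exactly those tight at any of its relative-interior points (namely the pairs collected in $\calF_v$), and $F\subseteq G$ holds precisely when the active set of $F$ contains that of $G$. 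Granting the whole chain, order-reversal of $\calT$ is the forward reading, order-reversal of $\calT^{-1}$ is the backward reading, and injectivity follows since $\calT(F)=\calT(G)$ then forces $F\subseteq G$ and $G\subseteq F$ (alternatively, injectivity is immediate from Proposition~\ref{pro:relint} together with the disjointness of relative interiors of distinct faces).

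It remains to establish the second equivalence, whose two directions are of very different difficulty. The implication $\calT^\pi(v)\supseteq\calT^\pi(w)\ (\forall\pi)\Rightarrow\calF_v\supseteq\calF_w$ is the easy one: given $\{A,B\}\in\calF_w$, I would choose $\pi$ with $A\cap B,B,A\cup B\in\calC^\pi$, apply Lemma~\ref{lem:eq}(2) to get $A\in\calT^\pi(w)\subseteq\calT^\pi(v)$, note that $B\in\calC^\pi\subseteq\calT^\pi(v)$, and conclude $\{A,B\}\in\calF_v$ from Lemma~\ref{lem:eq}(1), as $A$ and $B$ are incomparable.

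The hard direction is $\calF_v\supseteq\calF_w\Rightarrow\calT^\pi(v)\supseteq\calT^\pi(w)$, which I expect to be the main obstacle. Fix $\pi$ and $A\in\calT^\pi(w)$; if $A\in\calC^\pi$ then $A\in\calT^\pi(v)$ by \eqref{incl:chains}, so assume otherwise. The idea is to \emph{fill $A$ up} towards the chain $\calC^\pi$: letting $A_m$ be the largest chain element contained in $A$, the next chain element $A_{m+1}$ is incomparable to $A$, satisfies $A\cap A_{m+1}=A_m$ and $A\cup A_{m+1}=A\cup\{\pi(m+1)\}$, and this union again lies in $\calT^\pi(w)$ but has a strictly larger lower chain-anchor. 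Since $A,A_{m+1}\in\calT^\pi(w)$, Lemma~\ref{lem:eq}(1) gives $\{A,A_{m+1}\}\in\calF_w\subseteq\calF_v$, hence the modular identity $v(A\cup A_{m+1})+v(A_m)=v(A)+v(A_{m+1})$; combined with the additivity of the marginal vector $x^{v,\pi}$ and the fact that chain elements are always tight, this shows that $A\cup A_{m+1}\in\calT^\pi(v)$ implies $A\in\calT^\pi(v)$.

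Because each step strictly increases both the cardinality (bounded by $n$) and the lower chain-anchor, and a step is available whenever the current set is not yet on $\calC^\pi$, the iteration must terminate at a chain element, which lies in $\calT^\pi(v)$; unwinding the implications back down to $A$ then yields $A\in\calT^\pi(v)$. The delicate points to check carefully are that every replacement stays inside $\calT^\pi(w)$ and genuinely decreases the discrepancy between $A$ and $\calC^\pi$, so that the induction is well-founded and actually reaches the chain.
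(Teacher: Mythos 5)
Your proposal is correct, but the key direction is handled by a genuinely different argument than the paper's. The parts that coincide: your ``easy'' implication ($\calT^\pi(v)\supseteq\calT^\pi(w)$ for all $\pi$ implies $\calF_v\supseteq\calF_w$) is exactly the paper's proof that $\calT^{-1}$ is order-reversing, via a maximal chain through $A\cap B, B, A\cup B$ and the two parts of Lemma~\ref{lem:eq}; and packaging everything as the chain of equivalences $F\subseteq G \iff \calF_v\supseteq\calF_w \iff \calT^\pi(v)\supseteq\calT^\pi(w)\ (\forall\pi)$ makes injectivity come for free, where the paper instead derives it directly from Proposition~\ref{pro:relint}. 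The divergence is in the hard implication $\calF_v\supseteq\calF_w \Rightarrow \calT^\pi(v)\supseteq\calT^\pi(w)$, i.e.\ that $\calT$ itself is order-reversing. The paper argues abstractly: tightness of $A$ for $v_2$ is the linear equality \eqref{eq:sol}, which holds on all of $\relint F_2$ by Proposition~\ref{pro:relint}, hence is a linear consequence of the equalities indexed by $\calF_{v_2}$ that determine the affine hull of $F_2$; since $F_1\subseteq F_2$, the representative $v_1$ satisfies those equalities and therefore satisfies \eqref{eq:sol}. You instead make that implication constructive: you climb from $A$ to the chain $\calC^\pi$ by repeatedly forming the union with the next chain element $A_{m+1}$, each step licensed by Lemma~\ref{lem:eq}(1) (both sets are tight for $w$ and incomparable, so they form a modular pair in $\calF_w\subseteq\calF_v$) and then undone at the level of $v$ by additivity of $x^{v,\pi}$ together with tightness of the chain elements $A_m$, $A_{m+1}$. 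Your induction is well-founded (cardinality strictly increases, so it terminates on $\calC^\pi$, at worst at $N$), and the one assertion you leave implicit --- that $A\cup A_{m+1}$ is again tight for $w$ --- is either the closure of $\calT^\pi(w)$ under unions, which the paper itself invokes as well known in the proof of Lemma~\ref{lem:eq}, or follows from the same modular-pair computation applied to $w$ instead of $v$. What the paper's route buys is brevity; what yours buys is a self-contained combinatorial derivation that avoids the span-of-relative-interior reasoning entirely, exhibiting the ``implied equality'' as an explicit telescoping of tight supermodular equalities --- at the cost of a longer, bookkeeping-heavy induction. Both routes rest on the same standard polyhedral fact that face inclusion is equivalent to reverse inclusion of active constraint sets read off at relative-interior points.
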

 \begin{proof}
$\calT$ injective is an easy consequence of Proposition \ref{pro:relint}. 
We will prove that $\calT$ is an order-reversing map.
Let $F_{1}\subseteq F_{2}$ be faces of $G_S(\calL)$ and select arbitrarily $v_{1}\in \relint F_{1}$ and $v_{2}\in \relint F_{2}$.  We want to show 
\begin{equation}\label{incl2}
\calT^{\pi}(v_{2})\subseteq \calT^{\pi}(v_{1}) \qquad \text{for every $\pi\in \Pi_{\preceq}$.} 
\end{equation}
Let $\pi\in \Pi_{\preceq}$ and $A\in \calT^{\pi}(v_{2})$. Using
(\ref{eq:margin}), this is equivalent to saying that $v_2$ is a solution of the
equation in $v$:
\begin{equation}\label{eq:sol}
\sum_{i\in A} (v(A^\pi_{\pi^{-1}(i)}) - v (A^\pi_{\pi^{-1}(i-1)})) = v(A).
\end{equation}
As this equation is satisfied by all games in $\relint F_2$ and only these ones,
it follows that (\ref{eq:sol}) is implied by the equalities determining $\relint
F_2$, that is, those corresponding to $\calF_{v_2}$. As $F_1\subseteq F_2$,
$\relint F_1$ is determined by a~superset of equalities, and therefore the
equality (\ref{eq:sol}) is also satisfied by $v_1$.  Hence $A\in
\calT^\pi(v_1)$.

To show that $\calT^{-1}$ is order-reversing, let $\calT^{\pi}(v_{1})\subseteq
\calT^{\pi}(v_{2})$ for all $\pi\in\Pi_{\preceq}$, where $v_{1}\in\relint F_{1}$
and $v_{2}\in\relint F_{2}$, for some faces $F_{1}$ and $F_{2}$. We will prove
that $F_{2}\subseteq F_{1}$, which is the same as $\calF_{v_{1}}\subseteq
\calF_{v_{2}}$. Let $\{A,B\}\in\calF_{v_1}$. Then there exists $\pi$ s.t. $A\cap
B,A,A\cup B\in \calC^\pi$. Hence, by Lemma~\ref{lem:eq} (2), $B\in
\calT^\pi(v_1)\setminus\calC^\pi$, hence $B\in\calT^\pi(v_2)\setminus
\calC^\pi$. As $A\in\calC^\pi\subseteq \calT^\pi(v_2)$ and $A\| B$, by
Lemma~\ref{lem:eq} (1), $\{A,B\}\in \calF_{v_2}$. 
 \end{proof}

\begin{corollary}
$\calT$ is a lattice isomorphism from the face lattice $\Phi(G_S(\calL))$ onto a sublattice of $\calS(\calL)^{\Pi_{\preceq}}$.
\end{corollary}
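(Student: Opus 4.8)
The preceding Proposition already shows that $\calT$ is injective, order-reversing, and has an order-reversing inverse; equivalently, it is a bijective dual order isomorphism of the lattice $\Phi(G_S(\calL))$ onto its image $\calT(\Phi(G_S(\calL)))$. Consequently this image, carrying the order induced from $\calS(\calL)^{\Pi_{\preceq}}$, is automatically a lattice that is anti-isomorphic to $\Phi(G_S(\calL))$, and $\calT$ interchanges $\sqcup$ and $\sqcap$ with the meet and join of that image poset. The only genuinely new content of the corollary is therefore that the image is a \emph{sublattice} of $\calS(\calL)^{\Pi_{\preceq}}$, i.e.\ that it is closed under the meet (componentwise intersection) and the join (componentwise sublattice generated by the union) of that product lattice; once both closures hold, the operations of the image agree with the ambient ones and the statement follows. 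So the plan is to verify these two closure properties.

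For closure under meets I would use one additive construction. Given faces $F_1,F_2$ of $G_S(\calL)$, pick $v_i\in\relint F_i$. Since every point of a convex cone lies in the relative interior of its smallest containing face, and since a face of a cone that contains $v_1+v_2$ must contain both summands, the smallest face containing $v_1+v_2$ is exactly $F_1\sqcup F_2$; hence $v_1+v_2\in\relint(F_1\sqcup F_2)$. I then exploit that for supermodular $v$ the slacks $x^{v,\pi}(A)-v(A)$ are nonnegative (Theorem~\ref{thm:Cext}, item~2, which gives $x^{v,\pi}\in\calC(v)$) and are linear in $v$ (Lemma~\ref{lem:lininj}). A slack of $v_1+v_2$ therefore vanishes if and only if the corresponding slacks of $v_1$ and of $v_2$ both vanish, so $\calT^{\pi}(v_1+v_2)=\calT^{\pi}(v_1)\cap\calT^{\pi}(v_2)$ for every $\pi$. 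Thus the componentwise intersection of $\calT(F_1)$ and $\calT(F_2)$ equals $\calT(F_1\sqcup F_2)$, which is again in the image: the image is meet-closed, and $\calT$ carries $\sqcup$ to the meet of $\calS(\calL)^{\Pi_{\preceq}}$.

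Closure under joins is the main obstacle, because meet-closure together with the presence of the top element of $\calS(\calL)^{\Pi_{\preceq}}$ in the image (the image of the minimal face) does \emph{not} by itself force a finite subset of a lattice to be a sublattice. Concretely, for any $v\in\relint(F_1\cap F_2)$ and every $\pi$ I must show that $\calT^{\pi}(v)$ equals the sublattice $\langle\calT^{\pi}(v_1)\cup\calT^{\pi}(v_2)\rangle$ generated by the union. One inclusion is free: $\calT^{\pi}(v)$ is a sublattice (Lemma~\ref{lem:eq}), and $F_1\cap F_2\subseteq F_i$ together with the order-reversal already proved gives $\calT^{\pi}(v_i)\subseteq\calT^{\pi}(v)$, whence $\calT^{\pi}(v)\supseteq\langle\calT^{\pi}(v_1)\cup\calT^{\pi}(v_2)\rangle$. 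The hard inclusion is the reverse one: I must rule out that a \emph{generic} game $v$ of the intersection face acquires tight coalitions beyond those generated from the tight coalitions of $v_1$ and $v_2$. For this I would pass through the correspondence between tight sets and equalising pairs of Lemma~\ref{lem:eq}, using that $\relint(F_1\cap F_2)$ consists of the supermodular games whose family $\calF_{v}$ is the smallest one containing $\calF_{v_1}\cup\calF_{v_2}$; translating these pairs back into coalitions via part~(2) of that lemma should show that every $A\in\calT^{\pi}(v)$ is obtainable by meets and joins from members of $\calT^{\pi}(v_1)\cup\calT^{\pi}(v_2)$. This is the step I expect to require the most care.

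Assembling the pieces, meet-closure and join-closure together show that $\calT(\Phi(G_S(\calL)))$ is a sublattice of $\calS(\calL)^{\Pi_{\preceq}}$, with $\calT(F_1\sqcup F_2)=\calT(F_1)\wedge\calT(F_2)$ and $\calT(F_1\sqcap F_2)=\calT(F_1)\vee\calT(F_2)$ computed in $\calS(\calL)^{\Pi_{\preceq}}$. Combined with the bijectivity and the two-sided order reversal furnished by the preceding Proposition, this is exactly the assertion that $\calT$ is a lattice isomorphism (understood, as in the Proposition, up to this order reversal) of $\Phi(G_S(\calL))$ onto the sublattice $\calT(\Phi(G_S(\calL)))$ of $\calS(\calL)^{\Pi_{\preceq}}$.
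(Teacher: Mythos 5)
The paper gives no proof of this Corollary at all: it is intended as an immediate consequence of the preceding Proposition, with ``sublattice'' read weakly (a subset of $\calS(\calL)^{\Pi_{\preceq}}$ that is a lattice under the induced inclusion order) and ``isomorphism'' understood up to the order reversal that you correctly point out. You instead read ``sublattice'' strictly, as closure under the ambient meet and join of $\calS(\calL)^{\Pi_{\preceq}}$, and set out to prove that stronger statement. Your meet-closure half is correct and is in fact a genuine strengthening of what the paper asserts: $v_1+v_2\in\relint(F_1\sqcup F_2)$, the slacks $x^{v,\pi}(A)-v(A)$ are nonnegative on $G_S(\calL)$ by Theorem~\ref{thm:Cext} and additive in $v$ by Lemma~\ref{lem:lininj}, hence $\calT(F_1\sqcup F_2)=\calT(F_1)\cap\calT(F_2)$ componentwise.

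But the join-closure half, which you defer as ``the step requiring the most care'', is not merely hard---it is false, so your plan cannot be completed. Take the flat order on $N=\{1,2,3\}$, so $\calL=2^N$ and $\Pi_{\preceq}$ consists of all six permutations. Let $v_1\coloneqq u_{\{1,2\}}$ and $v_2(A)\coloneqq\max(\abs{A}-1,0)$; both generate extreme rays of $G^{\star}_S(\calL)$ ($u_{\{1,2\}}$ is a unanimity game, and for $v_2$ the tight inequalities $v(N)+v(\{k\})=v(\{i,k\})+v(\{j,k\})$ together with $0$-normalization cut out a single ray), so $F_k\coloneqq G_M(\calL)\oplus\dR_{\geq 0}v_k$, $k=1,2$, are faces of $G_S(\calL)$ with $F_1\sqcap F_2=F_1\cap F_2=G_M(\calL)$, whose core structure is the constant map $\pi\mapsto 2^N$ (every coalition is tight for a modular game). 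For the identity permutation $\pi$ one computes $x^{v_1,\pi}=(0,1,0)$ and $x^{v_2,\pi}=(0,1,1)$, hence $\calT^{\pi}(F_1)=\{\emptyset,\{1\},\{3\},\{1,2\},\{1,3\},N\}$ and $\calT^{\pi}(F_2)=\{\emptyset,\{1\},\{1,2\},\{1,3\},N\}$. The second is contained in the first, and the first is already a sublattice of $2^N$, so the ambient join of these two components is $\calT^{\pi}(F_1)$ itself, which omits $\{2\}$ and $\{2,3\}$ and thus differs from $\calT^{\pi}(F_1\sqcap F_2)=2^N$. Moreover, the componentwise-join tuple equals $\calT(F)$ for \emph{no} face $F$: if $\calT(F)\supseteq\calT(F_k)$ for $k=1,2$, the order-reversing inverse from the Proposition forces $F\subseteq F_1\cap F_2=G_M(\calL)$, i.e.\ $F=G_M(\calL)$, whose image is the constant tuple $2^N$. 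So the image of $\calT$ is meet-closed but genuinely not join-closed in $\calS(\calL)^{\Pi_{\preceq}}$: the Corollary is true only in the weak sense, which is exactly what the Proposition already delivers, and the extra content you aimed to supply does not hold.
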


\begin{remark}
The same reasoning can be applied to the face lattice of all $0$-normalized supermodular games, $\Phi(G_{S}^{\star}(\calL))$. Indeed, it follows from the direct sum decomposition \eqref{eq:directsum} that $\Phi(G_{S}^{\star}(\calL))$ and $\Phi(G_{S}(\calL))$ are isomorphic lattices.
\end{remark}

\section{Facets of the cone of supermodular games}\label{sec:facets}
Kuipers et al. \cite[Corollary 11]{KuipersVermeulenVoorneveld10} characterised those supermodular inequalities which determine the facets of $G_{S}(\calL)$ when $\calL=2^{N}$. Specifically, the facet-determining inequalities are 
\begin{equation}\label{ineq:Kuipers}
v(A\cup \{i,j\}) - v(A\cup\{i\}) - v(A\cup\{j\}) + v(A)\geq 0
\end{equation}
for all $A\subseteq N$ and every pair of distinct $i,j\in N\setminus A$. The next theorem identifies the facets of the cone $G_{S}(\calL)$, for any $\calL$. Since $\calL$ is the lattice of down-sets of a poset $(N,\preceq)$, if $A\in\calL$ and $i\in N$ are such that $\DA i\subseteq A$,  then $A\cup \{i\}\in \calL$.
\begin{theorem}\label{thm:facets}
The facets of $G_S(\calL)$ are given by the inequalities of the form
\begin{equation}\label{eq:p11}
v(A\cup \{i,j\}) - v(A\cup\{i\}) - v(A\cup\{j\}) + v(A)\geq 0
\end{equation}
with $A\in \calL$ and distinct $i,j\in N\setminus A$ such that $\DA
i\subseteq A$ and $\DA j\subseteq A$.
\end{theorem}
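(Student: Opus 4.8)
The plan is to combine two facts: that the inequalities \eqref{eq:p11} already cut out the whole cone, and that each of them is irredundant. First I record that $G_S(\calL)$ is full-dimensional in $G(\calL)$: taking the M\"obius transform $\hat v(Y)=1$ for every $Y\in\calL$ with $\abs{Y}\ge 2$ gives, via Lemma~\ref{lem:BL}, a game all of whose supermodular inequalities are strict, so $\dim G_S(\calL)=\abs{\calL}-1$. Hence the facets of $G_S(\calL)$ are in bijection with the extreme rays of the (pointed) dual cone $G_S(\calL)^\ast=\{\ell\mid \ell(v)\ge 0\text{ for all }v\in G_S(\calL)\}$, and it suffices to identify these extreme rays with the functionals $\ell_{A,i,j}(v)\coloneqq v(A\cup\{i,j\})-v(A\cup\{i\})-v(A\cup\{j\})+v(A)$ attached to the admissible triples $(A,i,j)$ of the statement. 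Note each such $\ell_{A,i,j}$ vanishes on the lineality space $G_M(\calL)$, in agreement with the decomposition \eqref{eq:directsum}.

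For the inclusion ``every facet has the form \eqref{eq:p11}'' I would show that an arbitrary supermodular inequality, indexed by an incomparable pair $A\| B$, is a non-negative integer combination of admissible functionals. Writing $C\coloneqq A\cap B$, $S\coloneqq A\setminus C$, $T\coloneqq B\setminus C$, I enumerate $S=\{s_1,\dots,s_p\}$ and $T=\{t_1,\dots,t_q\}$ along linear extensions of $(N,\preceq)$, so that every partial union $A_{k,l}\coloneqq C\cup\{s_1,\dots,s_k\}\cup\{t_1,\dots,t_l\}$ is again a down-set; this is possible precisely because $A,B,C\in\calL$. Telescoping the mixed second difference gives $v(A\cup B)+v(A\cap B)-v(A)-v(B)=\sum_{k=0}^{p-1}\sum_{l=0}^{q-1}\ell_{A_{k,l},\,s_{k+1},\,t_{l+1}}(v)$, and the ordering guarantees $\DA s_{k+1}\subseteq A_{k,l}$ and $\DA t_{l+1}\subseteq A_{k,l}$, so every summand is an admissible functional. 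Thus the $\ell_{A,i,j}$ generate $G_S(\calL)^\ast$ as a cone, every extreme ray is a positive multiple of some $\ell_{A,i,j}$, and (testing on unanimity games) distinct admissible functionals are non-proportional.

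For the converse I must show that each admissible $\ell_{A,i,j}$ spans an extreme ray of $G_S(\calL)^\ast$, equivalently that the face $F\coloneqq\{v\in G_S(\calL)\mid \ell_{A,i,j}(v)=0\}$ has dimension $\abs{\calL}-2$; by the standard criterion this amounts to producing a game in $\relint F$, i.e.\ a supermodular $v^\ast$ with $\ell_{A,i,j}(v^\ast)=0$ and $\ell_{A',i',j'}(v^\ast)>0$ for every other admissible triple. Here I would reduce to treating competitors one at a time: if for each competitor $(A',i',j')$ I find a supermodular $w$ with $\ell_{A,i,j}(w)=0$ and $\ell_{A',i',j'}(w)>0$, then the sum of all these $w$'s, being supermodular with all the tight constraints still tight and at least one of each competing constraint strict, lies in $\relint F$. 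Using the M\"obius expression $\ell_{A,i,j}(v)=\sum\hat v(Y)$ over $Y\in\calL$ with $i,j\in Y\subseteq A\cup\{i,j\}$, unanimity games $u_{Y^\ast}$ (at possibly enlarged supports $Y^\ast$) settle competitors with a different element pair or with the \emph{same} pair and a \emph{larger} set, after a short check excluding accidental simultaneous tightness at $(A,i,j)$.

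The genuinely delicate competitors, and the step I expect to be the main obstacle, are those with the \emph{same} pair $\{i,j\}$ but a \emph{smaller} set $A'\subsetneq A$: for games with non-negative dividends the $\{i,j\}$-second difference is monotone in the set, so making it vanish at $A$ forces it to vanish at every $A'\subseteq A$, and separating these nested squares is impossible without a negative higher-order dividend. The remedy is to localize: choosing $m\in A\setminus A'$ with $\DA m\subseteq A'$ and placing M\"obius mass $+1$ on the down-sets generated by $\{i,j\}$, $\{i,m\}$, $\{j,m\}$ and mass $-1$ on the one generated by $\{i,j,m\}$ produces a game whose single negative dividend is compensated pairwise, so it remains supermodular, with $\{i,j\}$-difference equal to $1$ at $A'$ and $0$ at $A$. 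Verifying that this gadget stays supermodular throughout $\calL$---that the three compensating down-sets actually lie in $\calL$ and that no further pair acquires negative curvature---is the technical heart; once it is in place, summing over all competitors yields the required relative-interior point and proves that $\ell_{A,i,j}$ is facet-defining.
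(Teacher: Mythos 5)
Your proposal is correct, and while its sufficiency half matches the paper's in substance, the irredundancy half takes a genuinely different route. For sufficiency, the paper derives a general supermodular inequality from the elementary ones \eqref{eq:p11} by induction on $\abs{A\Delta B}$, peeling off one element at a time; your closed-form double telescoping over the grid $A_{k,l}=(A\cap B)\cup\{s_1,\dots,s_k\}\cup\{t_1,\dots,t_l\}$ is the same decomposition written at once, and your key observation (enumerate $A\setminus B$ and $B\setminus A$ along linear extensions so that every $A_{k,l}$ is a down-set) is exactly what powers the paper's induction as well. The genuine divergence is the second half: the paper gives an \emph{exterior} certificate, building for each admissible $(A,i,j)$ a non-supermodular game with values in $\{0,\pm\epsilon\}$ that violates only that one inequality (after reducing to $\calL=2^N$), whereas you give an \emph{interior} certificate, a supermodular game in the relative interior of the candidate facet, assembled by summing one separator per competitor (the summation is legitimate because each separator is supermodular, hence nonnegative on every admissible functional, and tight at $(A,i,j)$). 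Your two flagged checks do go through: for a competitor with a different pair, if both $u_{\da i'\cup\da j'}$ and $u_{A'\cup\{i',j'\}}$ failed to separate, then taking $j'\notin\{i,j\}$ one gets $j'\in A$, which forces $i\preceq i'$ and $j\preceq i'$, and then either $i\in A$ or $j\in\DA i\subseteq A$, contradicting $i,j\notin A$; and your gadget $u_{\da i\cup\da j}+u_{\da i\cup\da m}+u_{\da j\cup\da m}-u_{\da i\cup\da j\cup\da m}$ is supermodular because it equals $Z\mapsto\max\{\abs{Z\cap\{i,j,m\}}-1,0\}$, a convex function of the modular function $Z\mapsto\abs{Z\cap\{i,j,m\}}$, and it indeed yields $\ell_{A,i,j}=0$ and $\ell_{A',i,j}=1$ for same-pair competitors $A'\subsetneq A$. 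As for what each approach buys: the paper's construction is single and short, but it needs the tedious $\epsilon$-case analysis and leaves implicit the full-dimensionality of $G_S(\calL)$ that is needed to pass from irredundancy of the inequality system to facets; your route makes full-dimensionality and the dual-cone bookkeeping explicit, and it produces relative-interior points of each facet, which exhibits their tight-set structure and ties in directly with the face-lattice analysis of Section~\ref{sec:faces}.
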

\begin{proof}
Observe that, since the lattice $\calL$ is ranked\footnote{A \emph{ranked} (also \emph{graded}) \emph{lattice} is a lattice in which all the maximal chains have the same cardinality.}, if $A$ and $A\cup B$ with
$B\subseteq N\setminus A$ are in $\calL$, then $A\cup\{i\}\in\calL$ for some
$i\in B$. 

First, we show that any  supermodular inequality can be derived from those of type
(\ref{eq:p11}). Consider
\begin{equation}\label{eq:p2}
v(A\cup B)-v(A) -v(B) +v(A\cap B)\geq 0, \quad (A,B\in\calL)
\end{equation} 
with $|A\Delta B|>2$. We show by induction on $|A\Delta B|$ that (\ref{eq:p2})
can be derived from (\ref{eq:p11}). First we establish the result for $|A\Delta
B|=3$. Take
\begin{equation}\label{eq:p3}
v(A\cup\{i,j,k\}) - v(A\cup\{i,j\}) - v(A\cup\{k\})+v(A)\geq 0,
\end{equation} 
with $A\cup\{i,j\}),A\cup\{k\}\in\calL$. Supposing $A'\coloneqq A\cup \{i\}\in\calL$ (by
the preliminary remark, either $A\cup\{i\}$ or $A\cup\{j\}$ belongs to $\calL$),
we have by the assumption
\begin{align*}
v(A'\cup \{j,k\})-v(A'\cup\{j\})-v(A'\cup\{k\})+v(A')& \geq 0,\\
v(A\cup \{i,k\})-v(A\cup\{i\})-v(A\cup\{k\})+v(A)& \geq 0,
\end{align*}
whose sum yields (\ref{eq:p3}). 

We suppose that the result holds for $|A\Delta B|=k$, where $k\geq 3$,
and let us prove it for $|A\Delta B|=k+1$. Consider the inequality
\begin{equation}\label{eq:p4}
v(A\cup\{i_1,\ldots,i_\ell,\ldots,i_{k+1}\})  - v(A\cup\{i_1,\ldots,i_\ell\})  -
v(A\cup\{i_{\ell+1},\ldots, i_{k+1}\})+v(A)\geq 0,
\end{equation}
with $A\cup\{i_1,\ldots,i_\ell\},A\cup\{i_{\ell+1},\ldots,
i_{k+1}\}\in\calL$. Put $A'\coloneqq A\cup\{i_{\ell+1}\}\in\calL$. It follows from the assumption that
\begin{align*}
v(A'\cup\{i_1,\ldots,i_\ell,i_{\ell+2},\ldots,i_{k+1}\}) - v(A'\cup
  \{i_1,\ldots,i_\ell\}) - v(A'\cup\{i_{\ell+2},\ldots, i_{k+1}\}) + v(A') &
  \geq 0\\
v(A\cup\{i_1,\ldots,i_{\ell+1}\}) - v(A\cup \{i_{\ell+1}\}) -
v(A\cup\{i_1,\ldots,i_\ell\}) + v(A)& \geq 0,
\end{align*}
whose sum gives (\ref{eq:p4}). 

Second, we prove that no inequality of type (\ref{eq:p11}) is redundant.  It is clearly sufficient to prove the result for the Boolean lattice $\calL=2^N$. Consider (\ref{eq:p11}) for fixed $A,i,j$ and define the following game $v_{A,i,j}$ for an arbitrary $\epsilon>0$:
\[
v_{A,i,j}(B) \coloneqq \begin{cases}
  \epsilon & \text{ if } B=A\cup\{i\},\\
  0 & \text{ if } B\subset A\cup\{i\} \text{ or } B=A\cup\{j\}\text{ or } B\supset
  A\cup\{i\},\\
  -\epsilon & \text{ otherwise.}
  \end{cases}
\]
We claim that $v_{A,i,j}$ satisfies all the 
inequalities (\ref{eq:p11}) except the one with $A, i,j$. This proves that the
latter inequality is not redundant. 

{\it Proof of the claim.} Put $v\coloneqq v_{A,i,j}$, $i\coloneqq 1,j \coloneqq 2$. Clearly,
$$v(A\cup\{1,2\})-v(A\cup\{1\})-v(A\cup\{2\})+v(A)=-\epsilon<0.$$ Consider the
quantity \[\Delta(B,i,j)\coloneqq v(B\cup\{i,j\})-v(B\cup\{i\})-v(B\cup\{j\})+v(B).\] We
first study $\Delta(B,i,j)$ when one of the terms has value $\epsilon$. If
$v(B\cup\{i,j\})=\epsilon$ or $v(B)=\epsilon$, the other terms can be 0 or $-\epsilon$,
so that $\Delta(B,i,j)\geq 0$. If $v(B\cup \{i\})=\epsilon$ (i.e., $B=A$, $i=1$), then $v(B\cup
\{j\})=-\epsilon$, unless $j=2$. Supposing this is not the
case, we have then $v(B\cup\{i,j\})=v(B)=0$. Hence, $\Delta(B,i,j)=0$.

We study now $\Delta(B,i,j)$ when the values of the terms are either 0 or
$-\epsilon$. Suppose that $v(B\cup\{i\})=v(B\cup\{j\})=0$. Both 
$B\cup\{i\}$ and $B\cup\{j\}$ are either proper subsets of $A\cup\{1\}$ or
proper supersets of it. Then both $B$ and $B\cup\{i,j\}$ are either subsets or supersets of $A\cup\{1\}$, and the equality is impossible since we excluded the value $\epsilon$
for $v$. Therefore, $v(B\cup\{i,j\})=v(B)=0$, and $\Delta(B,i,j)=0$. Suppose now
that $v(B\cup\{i\})=0$ and $v(B\cup\{j\})=-\epsilon$. Then either
$B\cup\{i\}\subset A\cup\{1\}$ or $B\cup\{i\}\supset A\cup\{1\}$, and in both
cases $j\not\in A\cup\{1\}$. It follows that, if $B\cup\{i\}\subset
A\cup\{1\}$, then $B\subset A\cup\{1\}$, therefore $v(B)=0$, yielding
$\Delta(B,i,j)\geq 0$. If $B\cup\{i\}\supset A\cup\{1\}$, then
$v(B\cup\{i,j\})=0$, hence the same conclusion holds.
Finally, suppose $v(B\cup\{i\})=v(B\cup\{j\})=-\epsilon$. Then, in any case, 
$\Delta(B,i,j)\geq 0$.
\end{proof}

 \begin{example}
 Let $(N,\preceq)$ and $\calL$ be as in Example \ref{example}. Applying Theorem \ref{thm:facets} we see that the facets of $G_S(\calL)$ are in bijection with the linear inequalities of the form 
 \begin{enumerate}
 \item $v(\{i,j\})\geq v(\{i\})+v(\{j\})$, for all distinct $i,j\in N\setminus \{1\}$,
 \item $v(A\cup B) + v(A\cap B)\geq v(A)+v(B)$, for all distinct $A,B \in \calL$ satisfying $|A|=|B|=2$,
 \item $v(N)+v(\{2,3\}) \geq v(\{1,2,3\})+v(\{2,3,4\})$.
 \end{enumerate}
 Thus, there are $7$ facets, whereas the cone $G_S(2^N)$ of supermodular games on the Boolean lattice $2^N$ has $\binom{n}{2}\cdot 2^{n-2}=24$ facets by \eqref{ineq:Kuipers}.
 \end{example}

\section*{Acknowledgements}
The work of Tom\'{a}\v{s} Kroupa has been supported from the Czech Science Foundation project GA16-12010S.

\end{document}